\definecolor{MyGrey}{rgb}{.804,.804,.756}
\newtheorem{theorem}{Theorem}[section]
\newtheorem{lemma}[theorem]{Lemma}
\theoremstyle{definition}
\theoremstyle{remark}
\numberwithin{equation}{section}
\newcommand*\lt{\mathrel{\triangleleft}}
\def\k{{\it k}}
\def\End{{\rm End}}
\newcommand{\N}{\mathbb{N}}
\newcommand{\Z}{\mathbb{Z}}
\newcommand{\R}{\mathbb{R}}
\newcommand{\X}{{\mbox{\sf X}}}
\newcommand{\Y}{{\mbox{\sf Y}}}
\newcommand{\T}{{\mbox{\sf T}}}
\newcommand{\A}{{\mbox{\sf H}}}
\newcommand{\I}{{\mbox{\sf I}}}
\newcommand{\Ker}{\operatorname{Ker}}
\renewcommand{\Im}{\operatorname{Im}}
\begin{document}

\vspace*{1.5pc}\large

\title{A prismatic classifying space}

\author{J. Scott Carter}
\address{University of South Alabama}
\curraddr{Department of Mathematics and Statistics\\Mobile, AL 36688, USA}
\email{carter@southalabama.edu}
\thanks{Supported by Simons Foundation collaborative grant: 318381}

\author{Victoria Lebed}
\address{Trinity College Dublin}
\curraddr{Hamilton Mathematics Institute, Trinity College, Dublin 2, Ireland}
\email{lebed@maths.tcd.ie}
\thanks{Supported by a Hamilton Research Fellowship (Hamilton Mathematics Institute, Trinity College Dublin).}

\author{Seung Yeop Yang}
\address{University of Denver}
\curraddr{Department of Mathematics\\Denver, CO 80208, USA}
\email{seungyeop.yang@du.edu}
\thanks{}

\subjclass[2010]{Primary 17D99, 55N35, 57Q45; secondary 57M15, 55R35, 20N99}

\date{}

\begin{abstract}

A qualgebra $G$ is a set having two binary operations that satisfy compatibility conditions which are modeled upon a group under conjugation and multiplication.
We develop a homology theory for qualgebras and describe a classifying space for it. This space is constructed from $G$-colored prisms (products of simplices) and simultaneously generalizes (and includes) simplicial classifying spaces for groups and cubical classifying spaces for quandles. Degenerate cells of several types are added to the regular prismatic cells; by duality, these correspond to ``non-rigid'' Reidemeister moves and their higher dimensional analogues. Coupled with $G$-coloring techniques, our homology theory yields invariants of knotted trivalent graphs in $\mathbb{R}^3$ and knotted foams in $\mathbb{R}^4$. We re-interpret these invariants as homotopy classes of maps from $S^2$ or $S^3$ to the classifying space of $G$.

\end{abstract}

\maketitle

\section{Introduction}

Consider a group, $G$, that acts upon itself via conjugation: $a \lt b = b^{-1}a b$. The group operation $\cdot$ (or more simply juxtaposition) and the conjugation operation $\lt$ satisfy the following compatibility conditions, for all $a,b,c \in G$:
\begin{enumerate}
\item[$\A$]\quad $(ab)c=a(bc)$ \hfill (associativity);
\item[$\Y\I$] \quad $(ab)\lt c= (a\lt c)(b \lt c)$ \hfill  (distributivity);
\item[$\I\Y$]\quad  $(a\lt b) \lt c= a \lt (bc)$ \hfill  (exponential law);
\item[$\I\I\I$] \quad $(a \lt b) \lt c = (a \lt c) \lt (b\lt c)$ \hfill (self-distributivity);
\item[$\I\I$] \quad $\forall a,b$ $\exists ! c$ such that $c\lt b =a$ \hfill (right invertibility);
\item[$\I $]  \quad $a\lt a =a$ \hfill  (idempotence);
\item [$\T$] \quad $a\cdot b = b \cdot(a\lt b)$ \hfill  (twisted commutativity).
\end{enumerate}

Precisely the same rules appear when one extends quandle coloring techniques from knots to knotted trivalent graphs and handle-body knots \cite{AI:MCQ,VL:qual} (see Fig.~\ref{coloredvertices}). Each of the above axioms translates a Reidemeister move for these objects (see Fig.~\ref{Rmoves}). We chose our axiom names to evoke these moves.

\begin{figure}[htb]
\begin{center}
\includegraphics[width=0.3\paperwidth]{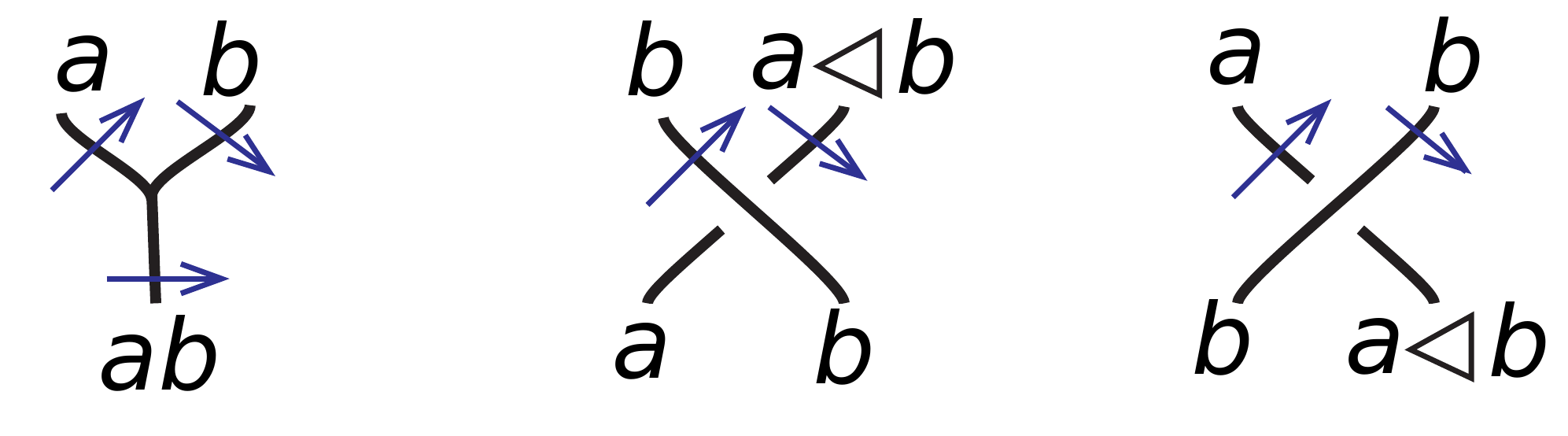}
\end{center}
\caption{Coloring rules for knotted trivalent graphs}\label{coloredvertices}
\end{figure}

\begin{figure}[htb]
\begin{center}
\includegraphics[width=0.6\paperwidth]{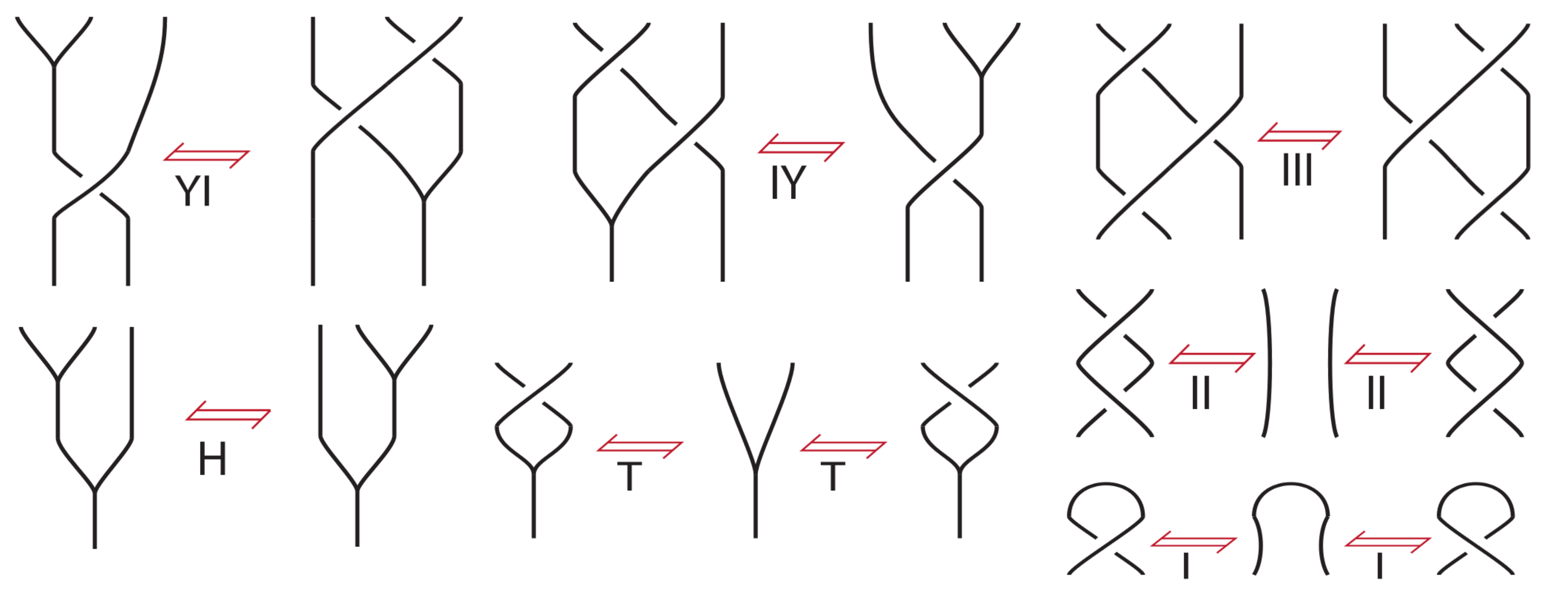}
\end{center}
\caption{Reidemeister moves for knotted trivalent graphs (without $\A$) and handle-body knots (including $\A$)}\label{Rmoves}
\end{figure}

Different subsets of our seven axioms also emerge in the study of elementary embeddings and Laver tables in set theory, and extended and parenthesized braids in braid theory. See \cite{DrapalLDM,DrapalLDM2,DehornoyParBr,DehornoyFreeALDS} and \cite[Chapter~$\mathrm{XI}$]{Dehornoy2} for more details.

We will call a set with two binary operations $\cdot$ and $\lt$ satisfying the seven axioms\footnote{In fact, Axiom $\I\I\I$ is a consequence of $\I\Y$ and $\T$. We leave it in the list to be able to select sub-lists defining different related structures.} a \emph{qualgebra}\footnote{The term is borrowed from \cite{VL:qual}, where the objects we discuss are called \emph{associative qualgebras}. Since we are interested only in the associative case, this adjective will be omitted.} (= quandle + algebra), and talk about a \emph{shalgebra} (= shelf + algebra) if only the first four axioms hold. See \cite{VL:qual} for non-group qualgebra examples. Recall that a \emph{quandle} (resp., a \emph{shelf}) is a set with a binary operation $\lt$ subject to Axioms $\I\I\I$-$\I$ (resp., $\I\I\I$ only). In addition to abstract algebra,  knot and set theories,
shelves play an important role in Hopf algebra classification, integration of Leibniz algebras, the study of the Yang--Baxter equation etc. Quandles and shelves can be thought of as algebraic counterparts of knots and positive braids respectively. In the same spirit, qualgebras and shalgebras ``algebrize'' handle-body knots and branched braids (i.e., braids with some zip- and unzip-like branching points, cf. \cite{VL:BranchedBraids}).

Many of the classical applications of quandles have enhanced versions involving quandle (co)homology. An analogous theory was described for qualgebras in small degree \cite{VL:qual}, and for multiple conjugation quandles (a particular type of qualgebras) in any degree \cite{CIST:HomMCQ}. This allows
 for enhancements of
qualgebra-based invariants for knotted trivalent graphs and handle-bodies, as well as invariants of knotted $2$-foams in $\R^4$.

In the present work, we extend the homology theory from \cite{CIST:HomMCQ} to any shalgebra $(G,\cdot,\lt)$ (Section~\ref{S:PrismHom}), and construct a cell complex $BG$ computing this homology (Section \ref{S:all}). Cells of small degree, which are the most important for knot-theoretic applications, are  given particular attention. Our cell complex interpolates between the classifying spaces of the semigroup $(G,\cdot)$ and of the shelf $(G,\lt)$, recalled in Section~\ref{S:SimplicesCubes}. Its cells are $G$-colored generalized prisms of different shape, involving simplices (inherited from associative homologies) and cubes (inherited from self-distributive homologies). By duality, our cell complex corresponds to knotted trivalent graphs and their higher-dimensional analogues, e.g. knotted foams, described in Section~\ref{S:Knottings}. The homology theory for qualgebras is obtained from that for shalgebras by taking a quotient of the defining chain complex. The classifying space $BG$ is then amended with additional cells (Section~\ref{S:Degeneracies}).
The idea of encoding additional axioms by sub-complexes is omni-present in algebra: it allows one to construct group and quandle homology theories out of those for semigroups and, respectively, shelves. For more on the sub-complex philosophy, see \cite{EK_MS:IdentitiesHomology}. The main application of our qualgebra homology theory, and in particular of its interpretation in terms of the classifying space $BG$, is the construction of invariants of handle-body knots and knotted foams presented in Section~\ref{S:Invariants}. There are several ways of thinking about these invariants. First of all, they are a natural consequence of the duality between our prisms and the knotted objects considered. Taking a step further, we interpret colored diagrams of $n$-dimensional knottings as based homotopy classes of maps from $S^{n+1}$ into $BG$. Here we work with $n=1$ and $n=2$ only, but the core of our methods adapts to any dimension.

\section{Prismatic homology}\label{S:PrismHom}

Fix a shalgebra $(G,\cdot,\lt)$. For any integer $n>0$, define an abelian group
\[C_n= \bigoplus_{k_1+ k_2 + \ldots + k_\ell=n, \; 1\leq \ell \leq n, \; k_j \geq 1} \Z[G^{k_1}\times G^{k_2} \times \cdots \times G^{k_\ell}].\]
Put $C_0 =0$. In this section we will endow the collection ${\mathcal C} =\{C_n: n\ge 0\}$ with differentials $\partial \colon C_n \to C_{n-1}$, turning it into a chain complex. Even better: the property $\partial^2=0$ will be equivalent to shalgebra axioms.

The notation $((g_1, \ldots,g_{k_1}),\ldots,(g_{s+1}, \ldots,g_{s+k_\ell}))$ for linear generators of $C_n$ will be simplified to $(g_1, \ldots,g_{k_1})| \ldots |(g_{s+1}, \ldots,g_{s+k_\ell})$; here $s=\sum_{h=1}^{{\ell-1}}k_h$. Also, if some $k_j=1$, we will write $g_{t+1}$ instead of $(g_{t+1})$; here $t=\sum_{h=1}^{{j-1}}k_h$.

The $\lt$-action of $G$ on itself extends to its powers diagonally:
\[(g_1, \ldots,g_k) \lt h = (g_1 \lt h, \ldots,g_k \lt h).\]

Now, for a generator $(g_{s+1}, \ldots, g_{s+k_j})$ of the intermediate factor $G^{k_j}$ of $C_n$, where $s=\sum_{h=1}^{{j-1}}k_h$, put
\begin{align}
\partial_{j;k_j} (g_{s+1}, \ldots, &g_{s+k_j})  = \lt g_{s+1}( g_{s+2}, \ldots, g_{s+k_{j}}) \notag\\
& + \sum_{i=1}^{k_j-1} (-1)^i ( g_{s+1}, \ldots, g_{s+i}\cdot g_{s+i+1}, \ldots, g_{s+k_j}) \label{E:PrismHom}\\
& +\ (-1)^{k_j}(g_{s+1}, \ldots, g_{s+k_j-1}). \notag
\end{align}
Here $\lt g_{s+1}$ indicates that $g_{s+1}$ acts as described above on anything to its left. 
We simplify $\partial = \partial_{j;k_j}$ when the subscript are clear from the context. Next, suppose that $P$ and $Q$ are generators of the chain groups $C_p$ and $C_q$, respectively, and that $\partial P$ and $\partial Q$ are already defined. Then use the Leibniz rule to define $\partial(P|Q)$:
\[\partial(P|Q)= (\partial P)| Q + (-1)^{p} P |(\partial Q).\]
This defines a differential $\partial \colon C_n \to C_{n-1}$ for all $n$ by induction. Indeed, the relation $\partial^2=0$ holds 
\begin{itemize}
\item on the intermediate factor $G^{k_j}$ due to the associativity of~$\cdot$ (Axiom $\A$) and the fact that the $\lt$-action is a semigroup action (Axiom $\I\Y$), as it is the case for a general bar complex with coefficients;
\item on the whole $C_n$ because the notation $(\partial P)|(\partial Q)$ is unambiguous, which is equivalent to the properties $\lt g \lt h = \lt h \lt (g \lt h)$ (Axiom $\I\I\I$) and $(g_i \cdot g_{i+1}) \lt h = (g_i \lt h) \cdot (g_{i+1} \lt h)$ (Axiom $\Y\I$).
\end{itemize}
An alternative proof of $\partial^2=0$ uses braided systems, cf. \cite{VL:Systems}.

In a moment, we will give explicit formulas for small degrees, but first note that our definition implies that $\partial_{j;1} (g_{s+1}) = (\lt g_{s+1} \text{\textvisiblespace} - \text{\textvisiblespace}).$ That is if any $k_j=1$, that particular factor will act on everything to its left and not act. The differential in that factor is a difference of the action and inaction. In particular, $\partial(g|Q)= Q-Q-g|(\partial Q)=-g|(\partial Q)$.

Furthermore, we remark that the construction of our chain complex is analogous to repeatedly taking the tensor product of the complexes for semigroup homology, with the exception that the $\lt$-action must be taken into consideration.

Let us proceed to compute some low dimensional boundaries. We chose to keep the canceling terms (the ones with allow one to reduce $\partial(g|Q)$ to $-g|(\partial Q)$) since they will appear in the geometric realization of our homology, as two copies of the same cell with opposite orientations.
\begin{eqnarray*}
\text{deg }2 : \quad \partial (a,b) &=& b-ab+a; \\
 \partial (a|b) &=& b-b -a \lt b + a;
\end{eqnarray*}
\begin{eqnarray*}
\text{deg }3 : \quad \partial (a,b,c) &=& (b,c) -(ab,c) +(a,bc)-(a,b); \\
 \partial ((a,b)|c) &=& b|c-ab|c+a|c +(a\lt c, b \lt c)-(a,b);\\
 \partial (a|(b,c)) &=& (b,c)-(b,c)-a \lt b|c + a|bc - a|b; \\
 \partial (a|b|c) &=& b|c-b|c -a\lt b|c + a|c +a\lt c|b \lt c -a|b;
\end{eqnarray*}
\begin{eqnarray*}
\text{deg }4 : \quad \partial  (a,b,c,d)  &=& ( b,c,d ) - (ab,c,d)  + (a,bc,d) - (a,b,cd) + (a,b,c); \\
\partial ((a,b,c)| d)  &=& (b,c) | d  -(ab,c)|d +  (a,bc)|d - (a,b)|d\\
&&- (a\lt d, b \lt d, c\lt d)+ (a,b,c); \\
 \partial ((a,b)|(c, d)) &=& b|(c,d) -ab|(c,d)+a|(c, d)\\
  && +(a \lt c,b \lt c )|d-(a ,b )|cd+(a,b)|c; \\
 \partial((a,b)|c|d) &=& b|c|d-ab|c|d+a|c|d +(a \lt c,b \lt c)|d-(a,b)|d\\ && - (a \lt d,b \lt d)|c \lt d + (a,b)|c; \\
 \partial (a| (b,c,d))&=& (b,c,d)-(b,c,d)\\
 &&- a \lt b|(c,d)+a|(bc,d)-a|(b,cd)+a|(b,c); \\
 \partial (a|(b,c)|d)  &=& (b,c)|d-(b,c)|d - a \lt b|c|d +a|bc|d-a|b|d\\
 &&-a \lt d|(b \lt d ,c \lt d)+a|(b,c); \\
 \partial (a|b|(c,d)) &=&b|(c,d)-b|(c,d)-a \lt b|(c, d)+a|(c,d)\\
 &&+ a \lt c| b \lt c|d-a|b|cd + a|b|c; \\
 \partial (a|b|c|d)&=&b|c|d-b|c|d -a \lt b|c|d+a|c|d\\
 &&+a \lt c | b \lt c| d - a|b|d-a \lt d | b \lt d|c \lt d+ a|b|c.
\end{eqnarray*}

The {\it prismatic homology groups} $H_n^{\rm P}(G, \cdot, \lt)$ are defined as the quotients
\[H_n^{\rm P}(G, \cdot, \lt)= \Ker \partial_n /  \Im \partial_{n+1}, \qquad n \in \N.\]
Here $\partial_i$ is the $C_i \to C_{i-1}$ component of $\partial$. 
The choice of the term {\it prismatic}, borrowed from \cite{CIST:HomMCQ}, will be clear after we describe prism-like classifying spaces for this theory.

\section{Simplicial and cubical classifying spaces}\label{S:SimplicesCubes}

\begin{figure}[htb]
\begin{center}
\includegraphics[width=2.1in]{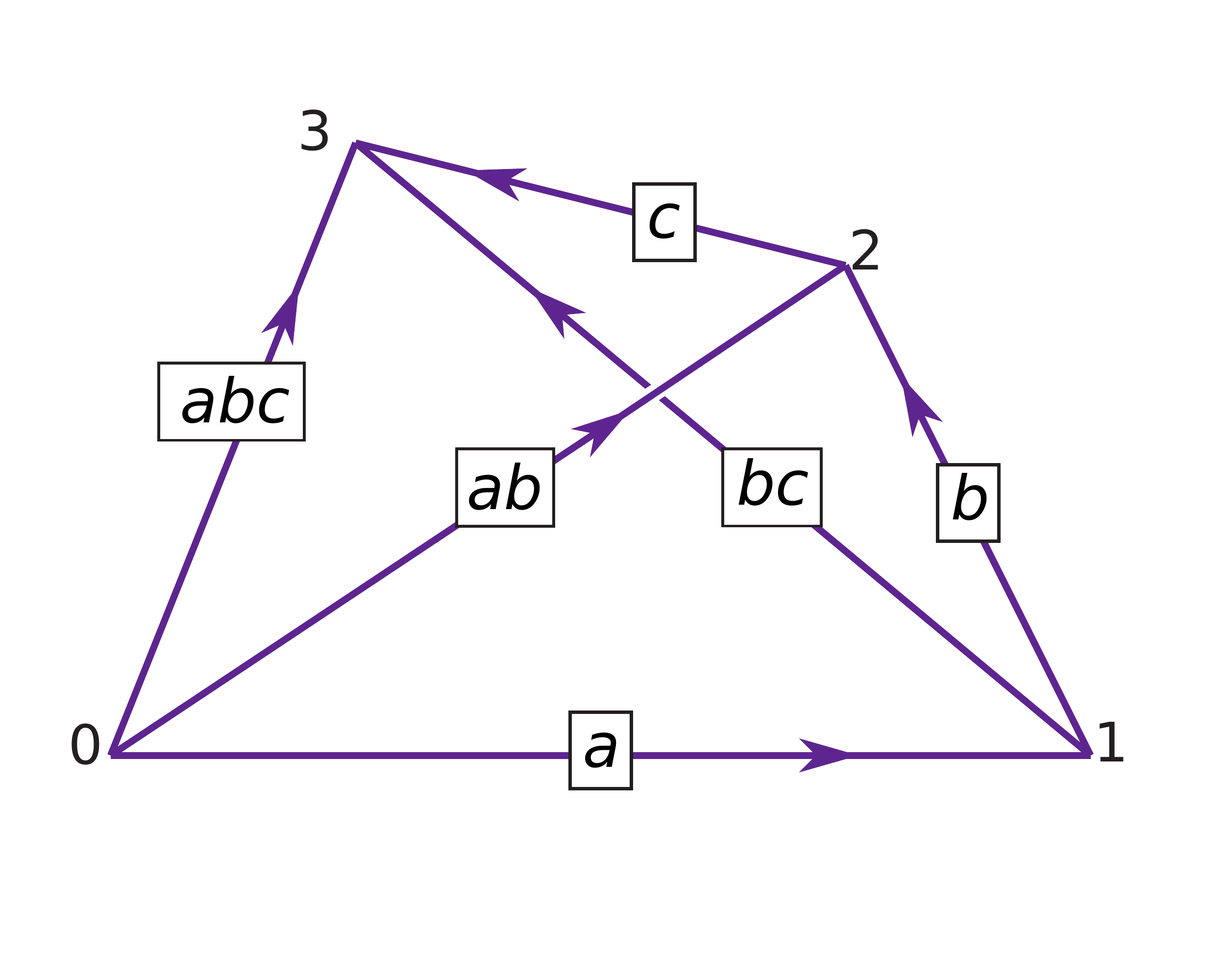}\quad \includegraphics[width=2in]{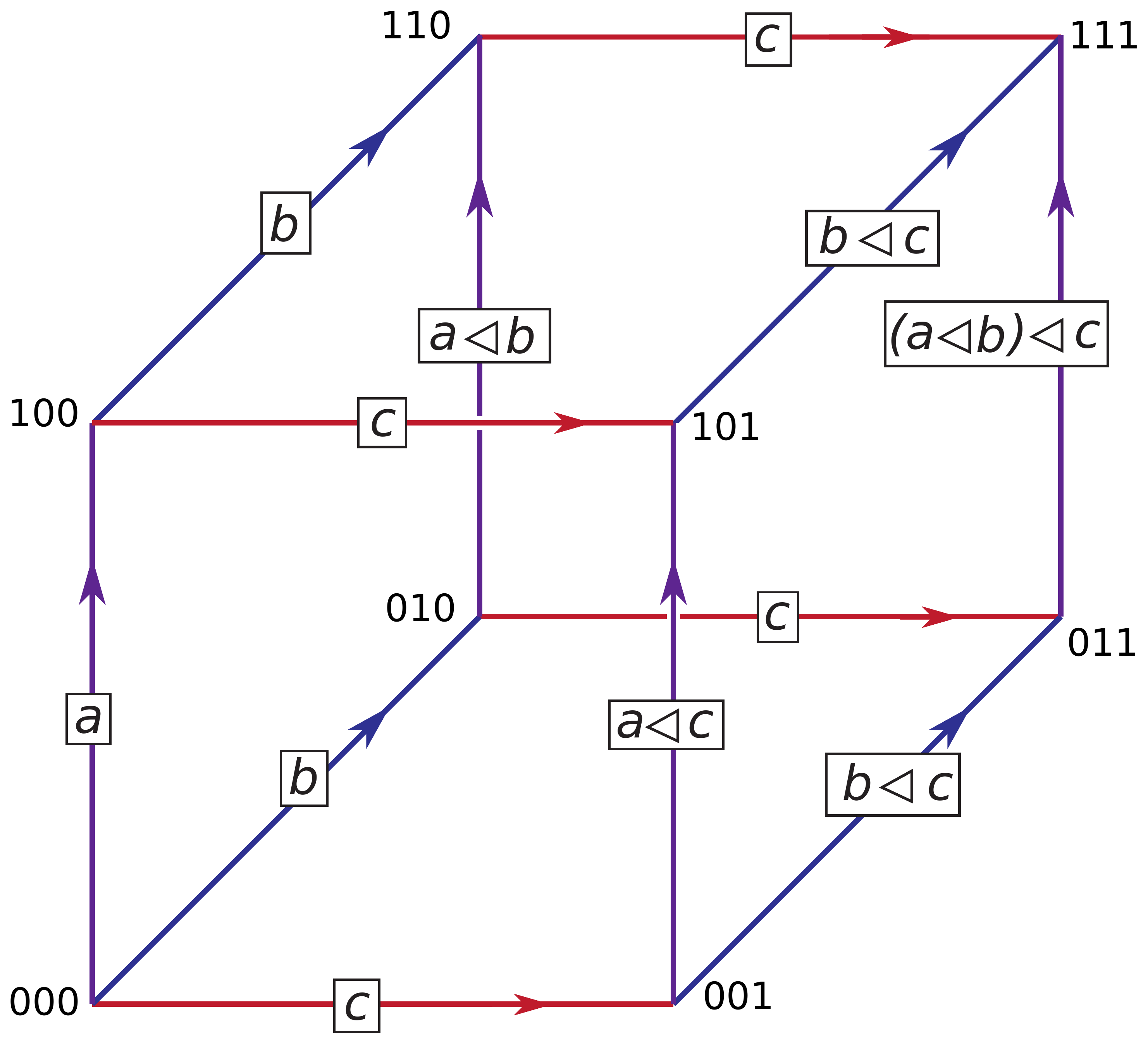}
\end{center}
\caption{The colored tetrahedron with labels $(a,b,c)$ (left), and the colored cube with labels $a|b|c$ (right)}\label{coloredA}
\end{figure}

Recall (from \cite{Cyclic,Emily,LK:SimplKho} for example) that a {\it pre-simplicial set} consists of a collection $(X_n)$, $ n=0,1,2, \ldots$ of sets together with {\it boundary maps} $d^{n}_i \colon X_{n} \to X_{n-1}$ that are defined for $0 \leq i \leq n$, and satisfy
\begin{equation}\label{E:presimpl}
d_i d_j = d_{j-1} d_i \qquad \text{ for } i<j.
\end{equation}
The dependencies on $n$ are typically omitted in the notation.

In case $G$ is a semigroup, the cartesian product $G^n$ can serve as $X_n$, with as boundary maps
\begin{eqnarray*}
d_0(g_1,g_2, \ldots, g_n) &=& (g_2,\ldots, g_n), \\
d_i(g_1,g_2, \ldots, g_n) &=& (g_1,\ldots, g_i\cdot g_{i+1}, \ldots,  g_n), \quad 0< i <n,\\
d_n(g_1,g_2,\ldots, g_n) &=& (g_1, g_2, \ldots, g_{n-1}).
\end{eqnarray*}

Now, a pre-simplicial set can be turned into a chain complex $(\Z X_{n}, \partial_n)$, where $\partial_n$ is the linearization of $\sum_{i=0}^n (-1)^id^{n}_i$. The property $\partial_{n-1} \partial_n=0$ can be obtained either algebraically, using~\eqref{E:presimpl}; or geometrically, using the {\it 
(fat) 
geometric realization} of $X_{n}$. The latter is constructed as follows:
\[|X|=\coprod_{n \geq 0} X_n \times \Delta^n \raisebox{-.15cm}{$\Big/$} \raisebox{-.3cm}{$\sim$}\, .\]
Here
\begin{itemize}
\item the $X_{n}$ are endowed with the discrete topology;
\item $\Delta^n$ is the $n$-simplex with its standard topology,
\[\Delta^n=\{\,(x_0,x_1, \ldots, x_n) \in \mathbb{R}^{n+1} \, :\, \sum_j x_j=1 \ \, \text{ and } \, \ 0\le x_j \,\};\]
\item the equivalence relation $\sim$ is generated by $(x, D^i (p)) \sim (d_i(x), p)$, where the $D_i$ are defined by
\[D^i(x_0, x_1, \ldots, x_{n-1})=(x_0, x_1, \ldots, x_{i-1}, 0, x_{i}, \ldots, x_{n-1}),\]
$0 \leq i \leq n$, and turn the $\Delta^n$ into a pre-cosimplical space (i.e., satisfy relations dual to~\eqref{E:presimpl}).
\end{itemize}
By construction, the homology of the chain complex $(\Z X_{n}, \partial_n)$ coincides with that of the cell complex $|X|$. 
We note that this realization does not take into consideration any degeneracy conditions which is why the adjective ``fat" is sometimes added. 

This geometric realization is particularly enlightening for the pre-simpl\-ic\-ial set $X_n=G^n$ constructed above from a semigroup $G$ (assumed discrete). Note that in this case the space $|X|$ is denoted by $BG$, and called the {\it classifying space} of $G$; the homology obtained is the usual semigroup homology. The space $BG$ consists of simplices whose edges are labeled by $G$ in a particular way. Namely, denote the vertex $(0,\ldots,0,1,0,\ldots,0)$ of $\Delta^n$, where $1$ is at the $i$th position, by $i$. This yields an order on the vertices, which induces an orientation of $\Delta^n$. Given an $n$-tuple $(g_1,\ldots,g_n) \in G^n$, the edges of the corresponding $n$-simplex are labeled as follows: the edge $i \to j$, $i<j$, receives the label $g_{i+1}\cdots g_j$. See Fig.~\ref{coloredA} for an example in dimension~$3$. The boundary map is the usual geometric one. It yields a combination of $G$-labeled $(n-1)$-simplices. In each of them, one should rename the vertices to get the canonical names $0,1,\ldots,n-1$; this is done in the only way that preserves the vertex order.

The whole simplicial story has a cubical counterpart. Namely, recall (for instance from \cite{BH_cubes,LebedVendramin}) that a {\it pre-cubical set} consists of a collection $(X_n)$, $n=0,1,2, \ldots$ of sets together with two families of boundary maps $d^{n; +}_i, d^{n;-}_i \colon X_{n} \to X_{n-1}$, $1 \leq i \leq n$, satisfying
\begin{equation*}
d^{\varepsilon}_i d^{\zeta}_j = d^{\zeta}_{j-1} d^{\varepsilon}_i \qquad \text{ for } i<j \text{ and } \varepsilon, \zeta \in \{+,-\}.
\end{equation*}

In case $G$ is a shelf, the cartesian product $G^n$ can serve as $X_n$, with
\begin{eqnarray*}
d^+_i(g_1,g_2, \ldots, g_n) &=& (g_1\lt g_i,\ldots, g_{i-1}\lt g_i, g_{i+1}, \ldots,  g_n), \quad 1 \leq i \leq n,\\
d^-_i(g_1,g_2, \ldots, g_n) &=& (g_1,\ldots, g_{i-1}, g_{i+1}, \ldots,  g_n), \quad 1 \leq i \leq n.
\end{eqnarray*}

Similarly to pre-simplicial sets, a pre-cubical set can be turned into a chain complex $(\Z X_{n}, \partial_n)$, where $\partial_n$ is the linearization of $\sum_{i=0}^n (-1)^i(d^{n;+}_i-d^{n;-}_i)$. The geometric realization of $X_{n}$ is constructed as follows:
\[|X|=\coprod_{n \geq 0} X_n \times \square^n \raisebox{-.15cm}{$\Big/$} \raisebox{-.3cm}{$\sim$}\, .\]
Here $\square^n=[0,1]^n$ is the standard $n$-cube, and the equivalence relation $\sim$ is generated by $(x, D_{\varepsilon}^i (p)) \sim (d^{\varepsilon}_i(x), p)$, $1 \leq i \leq n$, $\varepsilon \in \{+,-\}$, where
\begin{align*}
D_-^i(x_1, \ldots, x_{n-1})&=(x_1, \ldots, x_{i-1}, 0, x_{i}, \ldots, x_{n-1}),\\
D_+^i(x_1, \ldots, x_{n-1})&=(x_1, \ldots, x_{i-1}, 1, x_{i}, \ldots, x_{n-1}).
\end{align*}
Again, the homology of the chain complex $(\Z X_{n}, \partial_n)$ coincides with that of the cell complex $|X|$.

For the pre-cubical set $X_n=G^n$ constructed above from a shelf $G$, $|X|$ consists of cubes with $G$-labeled edges, and the boundary maps are the usual geometric ones. Given an $n$-tuple $(g_1,\ldots,g_n) \in G^n$, the edges of the corresponding $n$-cube are labeled as follows: the edge
\[(\theta_1, \ldots, \theta_{i-1}, 0, \theta_{i+1}, \ldots, \theta_n) \to (\theta_1, \ldots, \theta_{i-1}, 1, \theta_{i+1}, \ldots, \theta_n),\]
where all $\theta_j \in \{0,1\}$, receives the label $(\cdots (g_i \lt g_{j_1})\lt \cdots )\lt g_{j_p}$, where $j_1 < \cdots < j_p$ are all the indices from $\{i+1,\ldots,n\}$ such that  $\theta_{j}=1$. See Fig.~\ref{coloredA} for the case $n=3$. In this case the homology obtained is the usual homology of shelves, called {\it rack homology}, and the space $BG=|X|$ is called the {\it classifying space}, or the {\it rack space} of $G$ \cite{RackHom}.

If a semigroup $G$ is in fact a monoid (i.e., has a unit $e$), then the maps
\[s_n^j(g_1,g_2, \ldots , g_{n})=(g_1, \ldots, g_j, e, g_{j+1}, \ldots, g_{n}),\]
called {\it degeneracies}, are compatible with the $d_j$ in the sense that the collection $(X_n;d^n_i;s_n^j)$ satisfies the axioms of a {\it simplicial set}. The complex $(\Z X_{n}, \partial_n)$ associated to any  simplicial set has a {\it degenerate sub-complex} $D_n=\sum_j \Z \Im s_n^j$. The quotient, called {\it normalized complex}, generally behaves better than the original complex. The geometric realization of a simplicial set is obtained from that of the underlying pre-simplicial set by adding the relations $(x, S_j (p)) \sim (s^j(x), p)$, where
\[S_j(x_0,\ldots, x_{n+1})=(x_0, \ldots, x_j + x_{j+1}, \ldots, x_{n+1}).\]
If our simplicial set originates from a monoid, as above, then one can handle these additional relations by adding a $2$-cell whose boundary is an $e$-labeled edge. In other words, one is allowed to shrink all $e$-labeled edges. The homology obtained is the classical group homology.

Similarly, suppose that a shelf $G$ is a {\it spindle}---i.e., satisfies axiom $\I$: $a\lt a =a$ for all $a \in G$; any quandle will do. Then the degeneracies
\[\widetilde{s}_n^{\; j}(g_1,\ldots, g_{n})=(g_1, \ldots, g_{j-1}, g_{j}, g_{j}, g_{j+1}, \ldots, g_{n})\]
and the boundary maps $d^{n;\pm}_i$ above turn the collection $(G^n)$ into a {\it weak skew cubical} set. See \cite{LebedVendramin} for the definition, and its difference from the more classical notion of cubical sets. Again, the quotient of $(\Z X_{n}, \partial_n)$ by the {\it degenerate sub-complex} $D_n=\sum_j \Z \Im \widetilde{s}_n^{\; j}$ is called {\it normalized complex}, and behaves better than the original complex. It is used, for instance, for constructing quandle cocycle invariants for knots; see Section~\ref{S:Invariants} for more details. In the geometric realization of a weak skew cubical set, one needs to add the relations $(x, \widetilde{S}_j (p)) \sim (\widetilde{s}^{\; j}(x), p)$, where
\[\widetilde{S}_j(x_0,\ldots, x_{n+1})=(x_0, \ldots, x_j + x_{j+1}-x_jx_{j+1}, \ldots, x_{n+1}).\]
This map can be thought of as a squeezing onto the diagonal hyperplane $x_j = x_{j+1}$, followed by a rescaling\footnote{This explains the term \emph{skew}.}. If our weak skew cubical set originates from a spindle, then one can handle these extra relations by adding extra cells. For instance, for any $a \in G$, one needs one extra $3$-cell whose boundary is a square with four $a$-labeled edges (Fig.~\ref{F:degeneracies}). 
\begin{figure}[htb]
\begin{center}
\includegraphics[width=0.25\paperwidth]{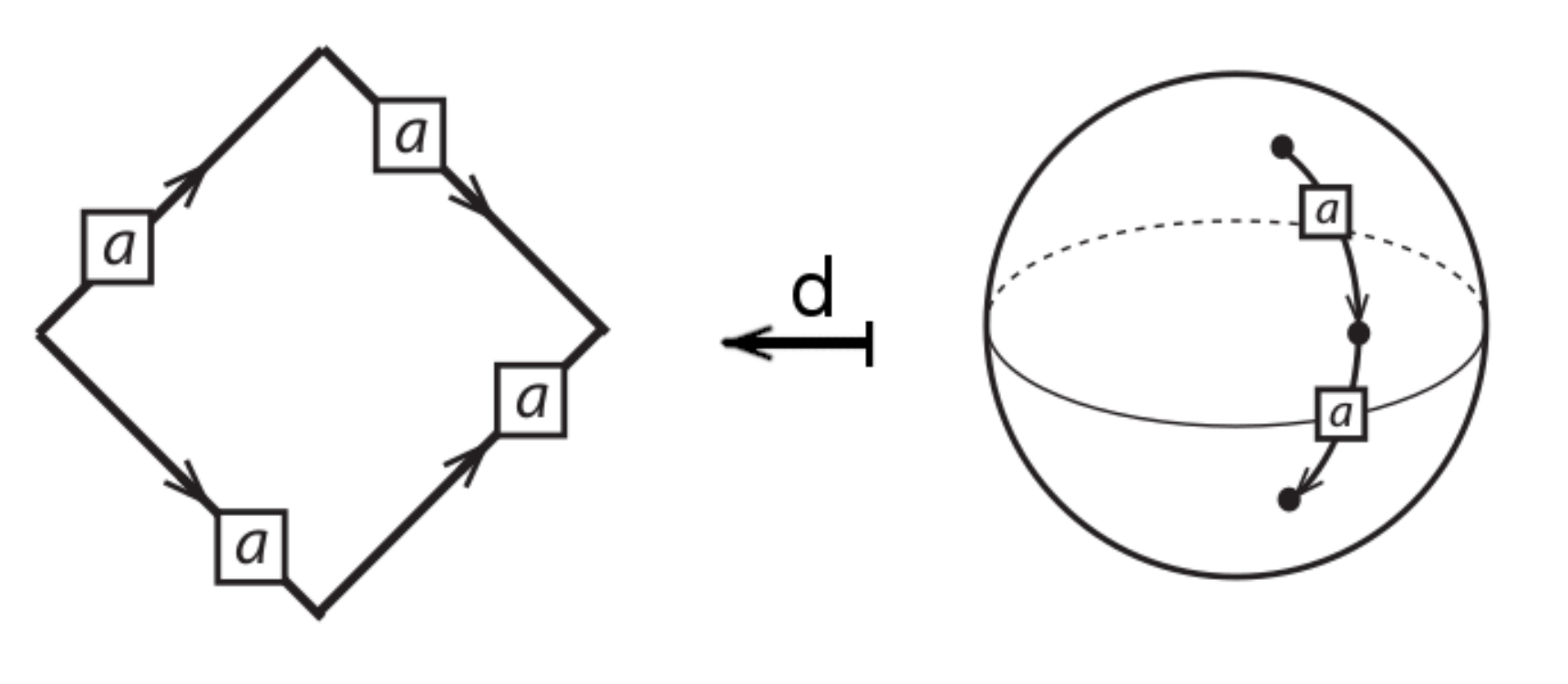}
\end{center}
\caption{Degenerate $2$-cells and additional $3$-cells for a quandle space}\label{F:degeneracies}
\end{figure}
In this case, the {rack space} is transformed into the {\it quandle space} from \cite{Nosaka_QuSpace, Nosaka_QuSpace2, Yang_ExQuSpace}, and we get the quandle homology from \cite{QuandleHom}.

The connection between the homology of associative structures and simplicial sets is in fact very strong: as shown in \cite{SimplIsHoch}, the cohomology of any locally finite simplicial complex is the Hochschild cohomology of certain explicitly constructed unital associative algebra. It would be interesting to find a similar counterpart of the weak skew cubical cohomology. A reasonable candidate seems to be the cohomology of set-theoretic solutions to the Yang--Baxter equation, of which the rack cohomology is a particular case \cite{HomologyYB, LebedVendramin}.

\section{Prismatic classifying spaces}\label{S:all}

We now turn to the central construction of this paper---the classifying space $BG$ of a shalgebra $(G,\cdot,\lt)$\footnote{We use the same notation $BG$ for all classifying spaces in this paper. Its precise meaning is determined by the object $G$  we are working with.}. It is inspired by the classifying spaces of a semigroup and a shelf from the previous section, and is a merge of the two. We will start with a detailed description of the lower dimensional skeleton of $BG$, which is of particular importance for the purposes of this paper.
 We will then explain what happens in higher dimensions, and show that the prismatic homology $H^{\rm P}(G, \cdot, \lt)$ of our shalgebra can be understood as the homology of the prismatic space $BG$.

\subsection{Generating prisms in dimensions up to $3$}\label{SS:123}

The cell complex $BG$ has a unique $0$-dimensional cell\footnote{Prismatic homology has a more general version, where coefficients in a shalgebra module $M$ are added to the construction. In this case we need one $0$-dimensional cell per element of~$M$. The generalization of our prismatic classifying space to this setting is straightforward, but we omit it in order not to overload the presentation.}. Its $1$-dimensional cells are indexed by the elements of $G$. They have the form $(a,\Delta^1)$, $a \in G$, and can be thought of as $G$-labeled edges, as indicated in Fig.~\ref{geomreal1} (left).

\begin{figure}[htb]
\begin{center}
\includegraphics[width=3in]{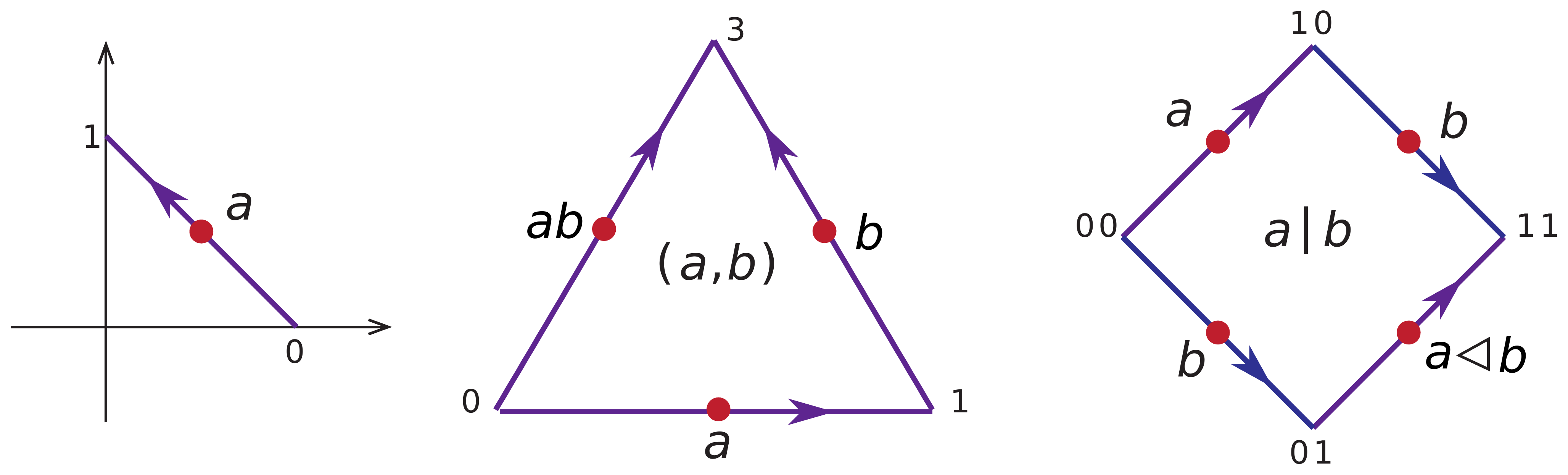}
\end{center}
\caption{Labeled edges, triangles, and squares}\label{geomreal1}
\end{figure}

The same Figure depicts
$2$-dimensional cells, which are either triangles or squares. For each pair of elements in $G$ there is a triangle that is labeled $(a,b)$ and a square that is labeled $a|b$. The boundary of such a $2$-cell is labeled using $a$, $b$, and either $ab$ or $a\lt b$ in the triangle and square case respectively.

\begin{figure}[htb]
\begin{center}
\includegraphics[width=2.5in]{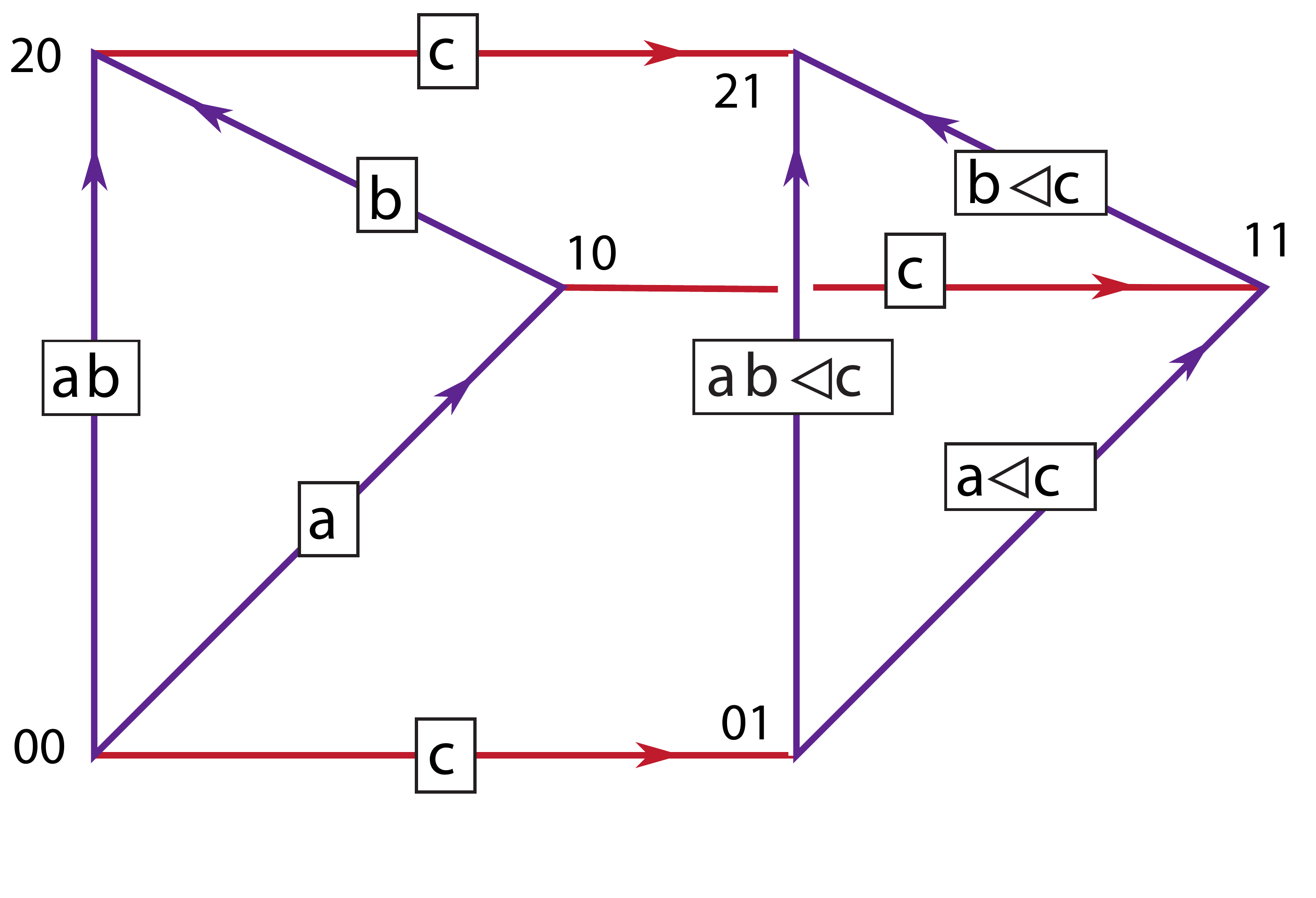} \includegraphics[width=2in]{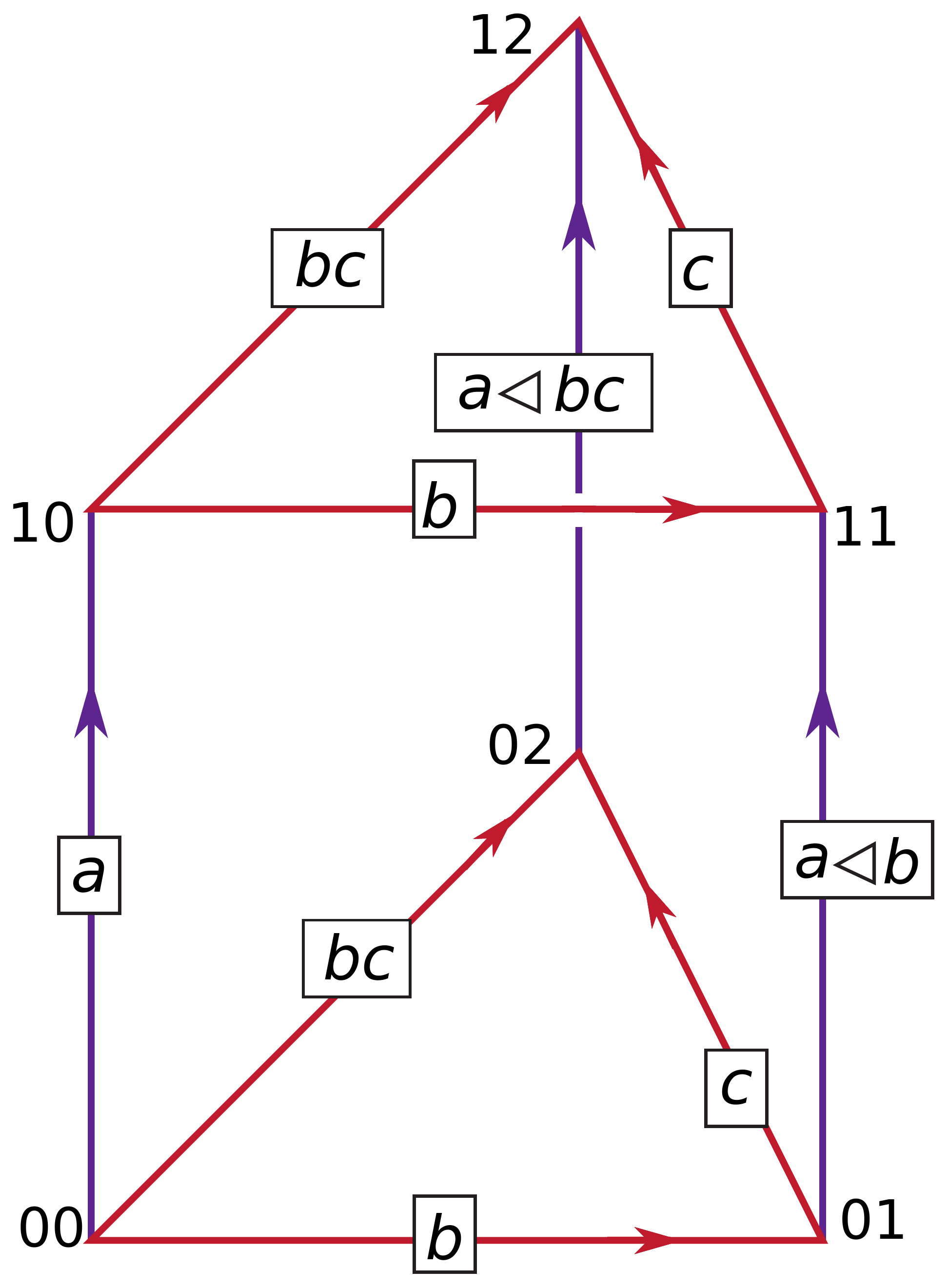}
\end{center}
\caption{The colored prisms with labels $(a,b)|c$ (left) and $a|(b,c)$ (right)}\label{coloredYI}
\end{figure}

In dimension $3$, there are four different types of prismatic $3$-cells: the two intermediate types are indeed triangular prisms $\Delta^2\times\Delta^1$ and $\Delta^1\times\Delta^2$ while the extreme cases are the tetrahedron $\Delta^3$ and the cube $\Delta^1 \times \Delta^1 \times \Delta^1.$ These are labeled by triples of elements in the shalgebra $G$. Thus for each triple $(a,b,c)\in G^3$ there is a tetrahedron in $BG$; for each triple written as  $(a,b)|c$ there is a prism $\Delta^2\times \Delta^1$; for each triple $a|(b,c)$ there is a prism $\Delta^1\times \Delta^2$; and there are cubes labeled by triples $a|b|c$. In Figures~\ref{coloredA} and \ref{coloredYI},
 these $3$-dimensional polyhedra are indicated with labels upon their edges. The $2$-dimensional faces are squares and triangles. We trust that the reader can supply the labels to these faces
and recover the boundary formulas for $\partial(a,b,c)$, $\partial((a,b)|c)$, $\partial (a|(b,c))$, and $\partial(a|b|c)$ from Section~\ref{S:PrismHom}.

\subsection{Generating prisms in dimension $4$}\label{SS:4}

\begin{figure}
\begin{center}
\includegraphics[width=5.5cm]{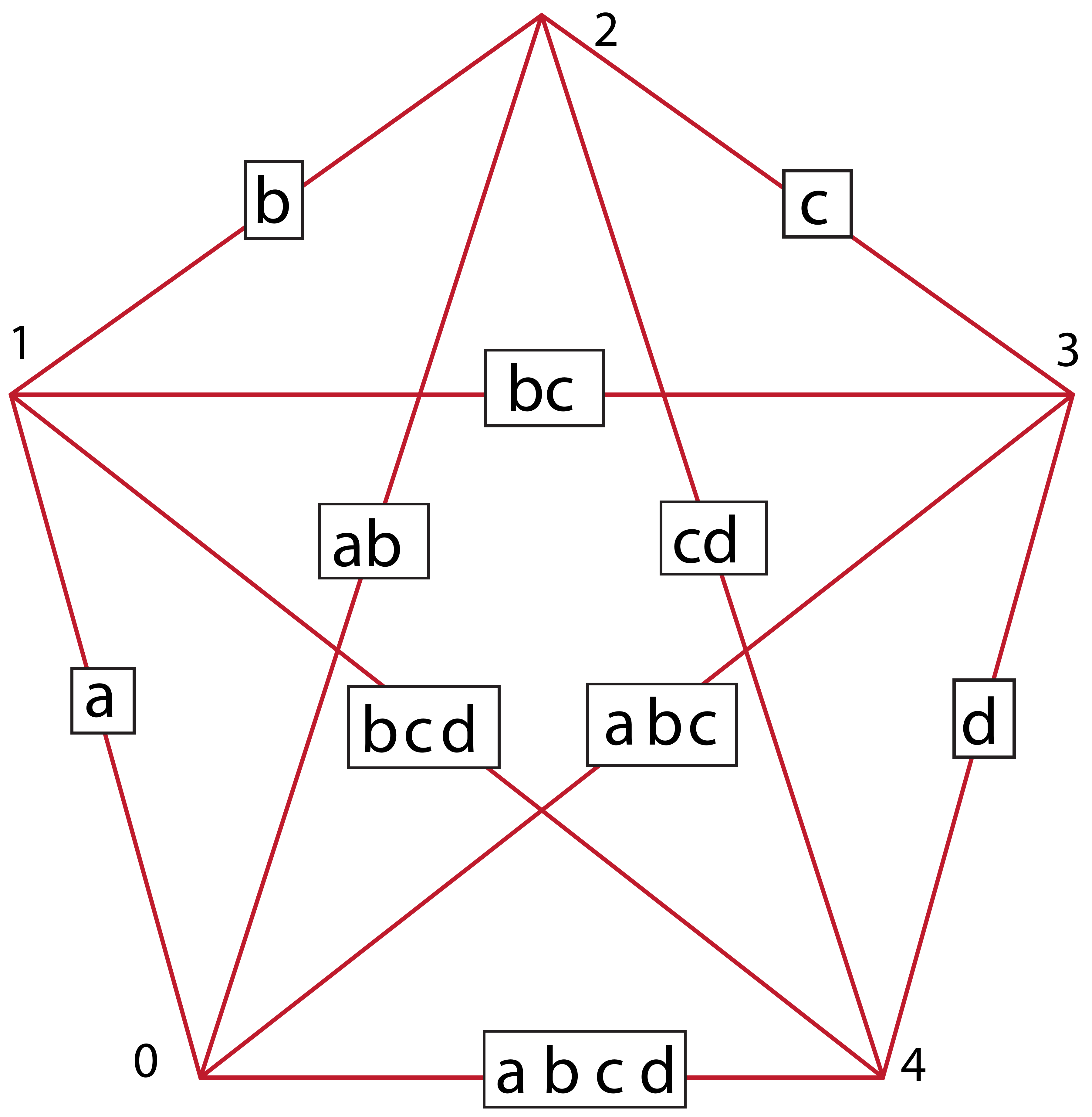}
\end{center}
\caption{The colored $\Delta^4$ with labels $(a,b,c,d)$}\label{coloredpentagram}
\end{figure}

The authors anticipate that most readers will be familiar, at best, with the $4$-dimensional polytopes that are the $4$-simplex $\Delta^4$ and the $4$-dimensional hypercube $\Delta^1 \times \Delta^1 \times\Delta^1 \times\Delta^1$. These two extreme $4$-dimensional cells are labeled by quadruples of shalgebra elements $(a,b,c,d)$ and $a|b|c|d$, respectively.

\begin{figure}
\begin{center}
\includegraphics[width=8cm]{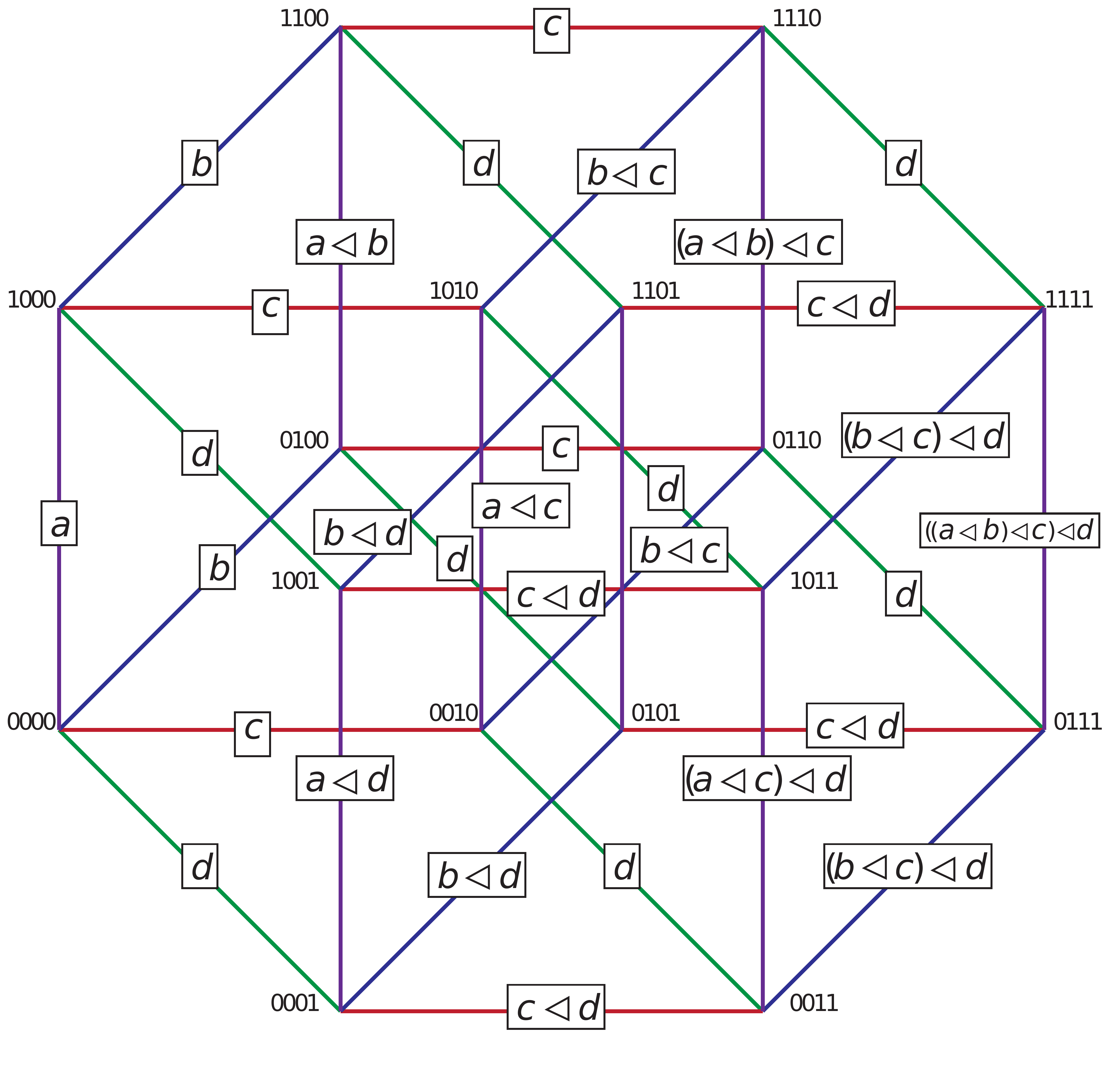}
\end{center}
\caption{The colored $\Delta^1 \times \Delta^1 \times \Delta^1 \times \Delta^1$ with labels $a|b|c|d$}\label{coloredhypercube}
\end{figure}

\begin{figure}
\begin{center}
\includegraphics[width=7.5cm]{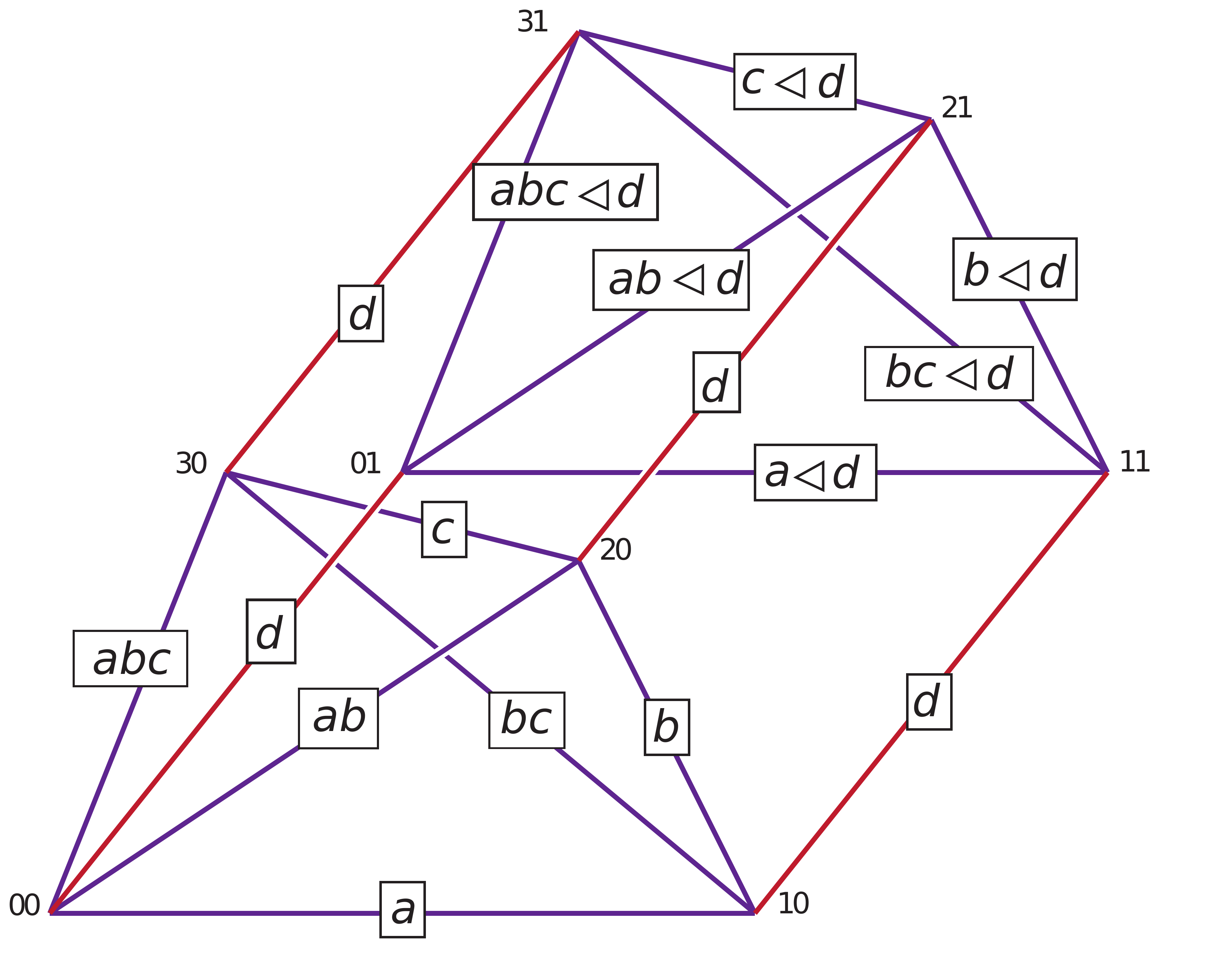}
\end{center}
\caption{The colored $\Delta^3 \times \Delta^1$ with labels $(a,b,c)|d$}\label{coloredAI}
\end{figure}

In the case of the $4$-simplex $\Delta^4$, its $1$-skeleton is the complete graph on $5$ vertices.
 The edges are labeled by products of four shalgebra elements $a,b,c,d$, as explained in Section~\ref{S:SimplicesCubes}. This labeling is depicted in Fig.~\ref{coloredpentagram}; here and below edge orientation is omitted for readability. The $3$-dimensional faces are tetrahedra with inherited labels; they yield the boundary $\partial(a,b,c,d)$, as computed in Section~\ref{S:PrismHom}.

\begin{figure}
\begin{center}
\includegraphics[width=7cm]{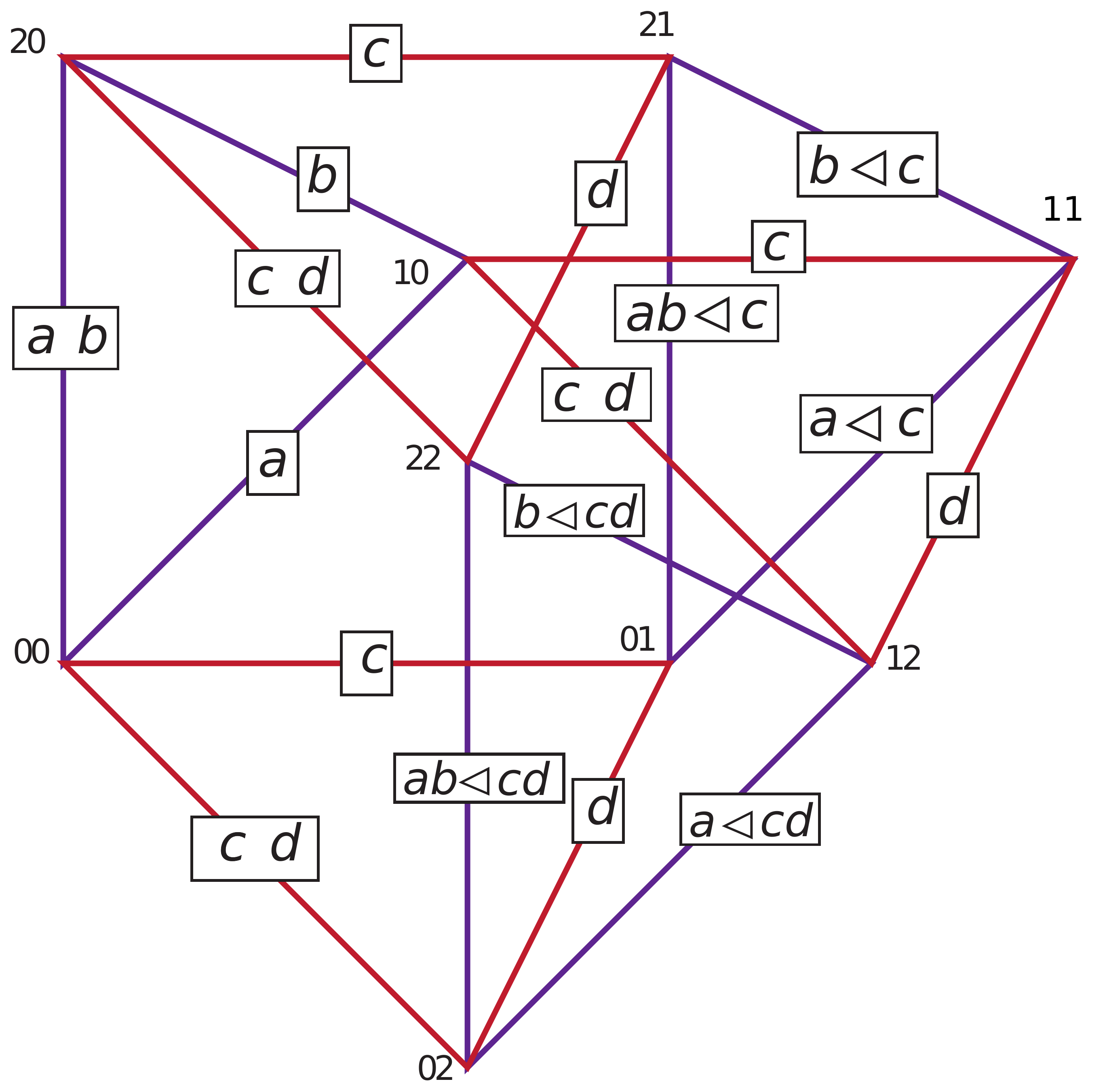}
\end{center}
\caption{The colored $\Delta^2 \times \Delta^2$ with labels $(a,b)|(c,d)$}\label{coloredYY}
\end{figure}

\begin{figure}
\begin{center}
\includegraphics[width=7.5cm]{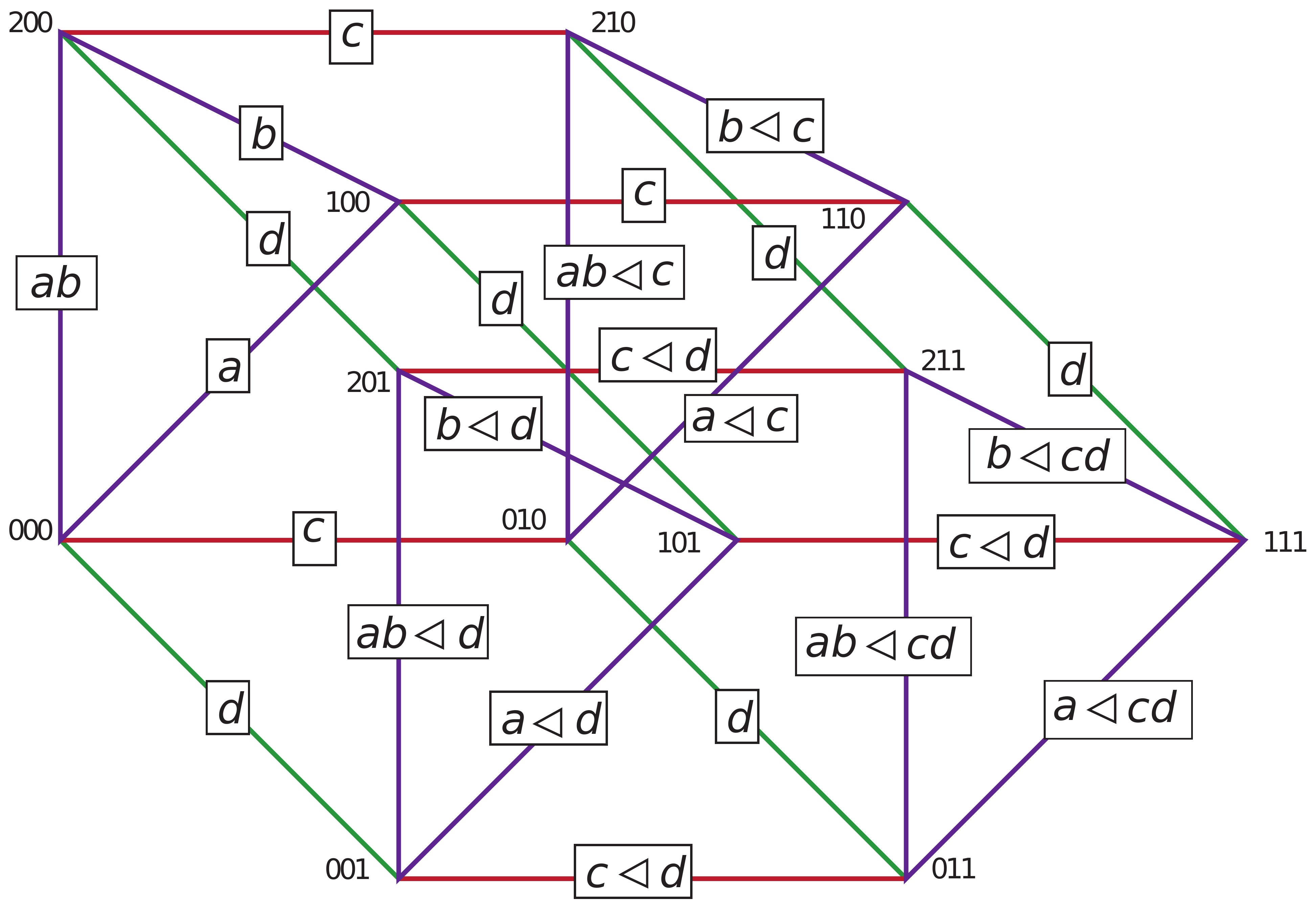}
\end{center}
\caption{The colored $\Delta^2 \times \Delta^1 \times \Delta^1$ with labels $(a,b)|c|d$}\label{coloredYII}
\end{figure}

\begin{figure}
\begin{center}
\includegraphics[width=5cm]{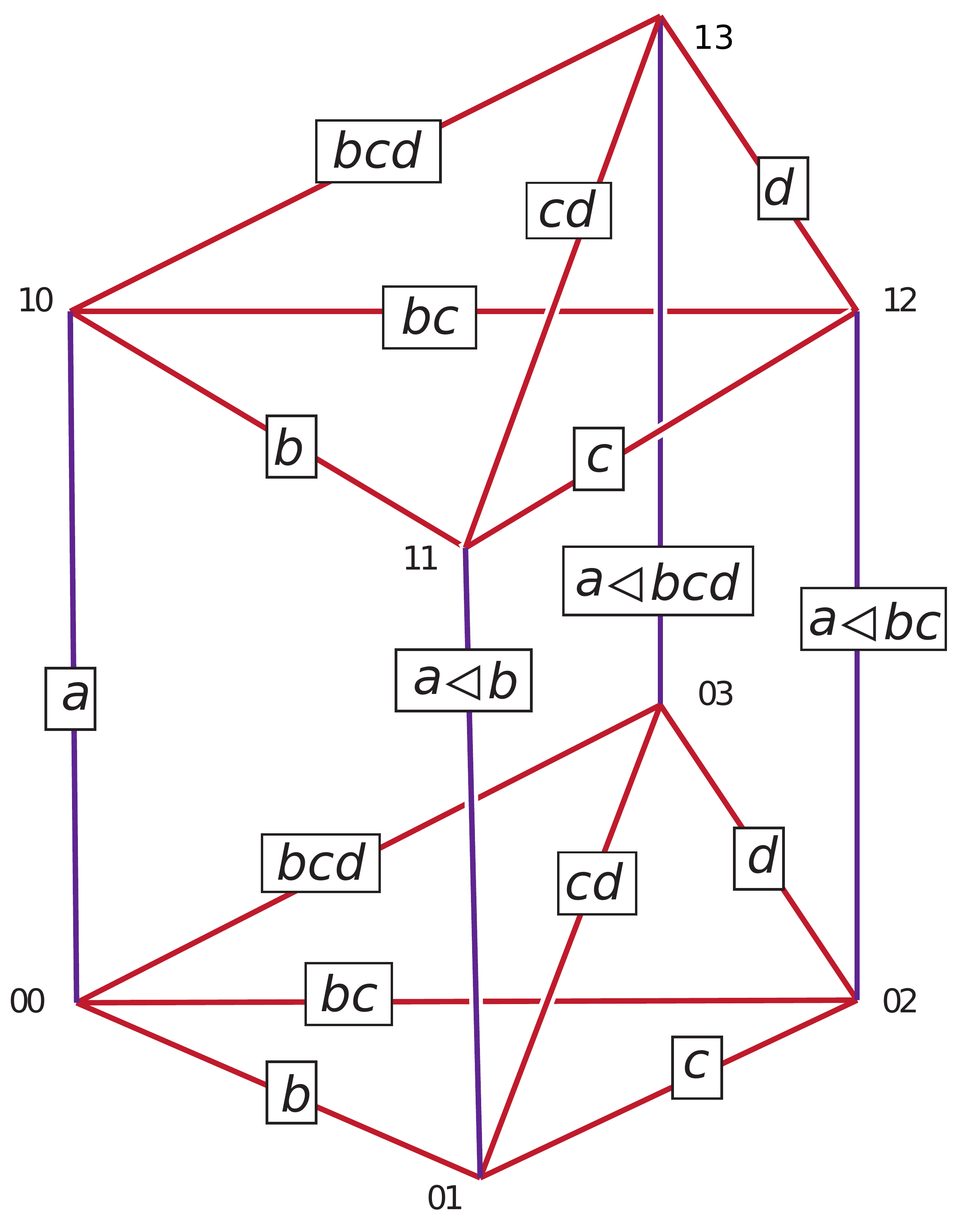}\quad \includegraphics[width=6.cm]{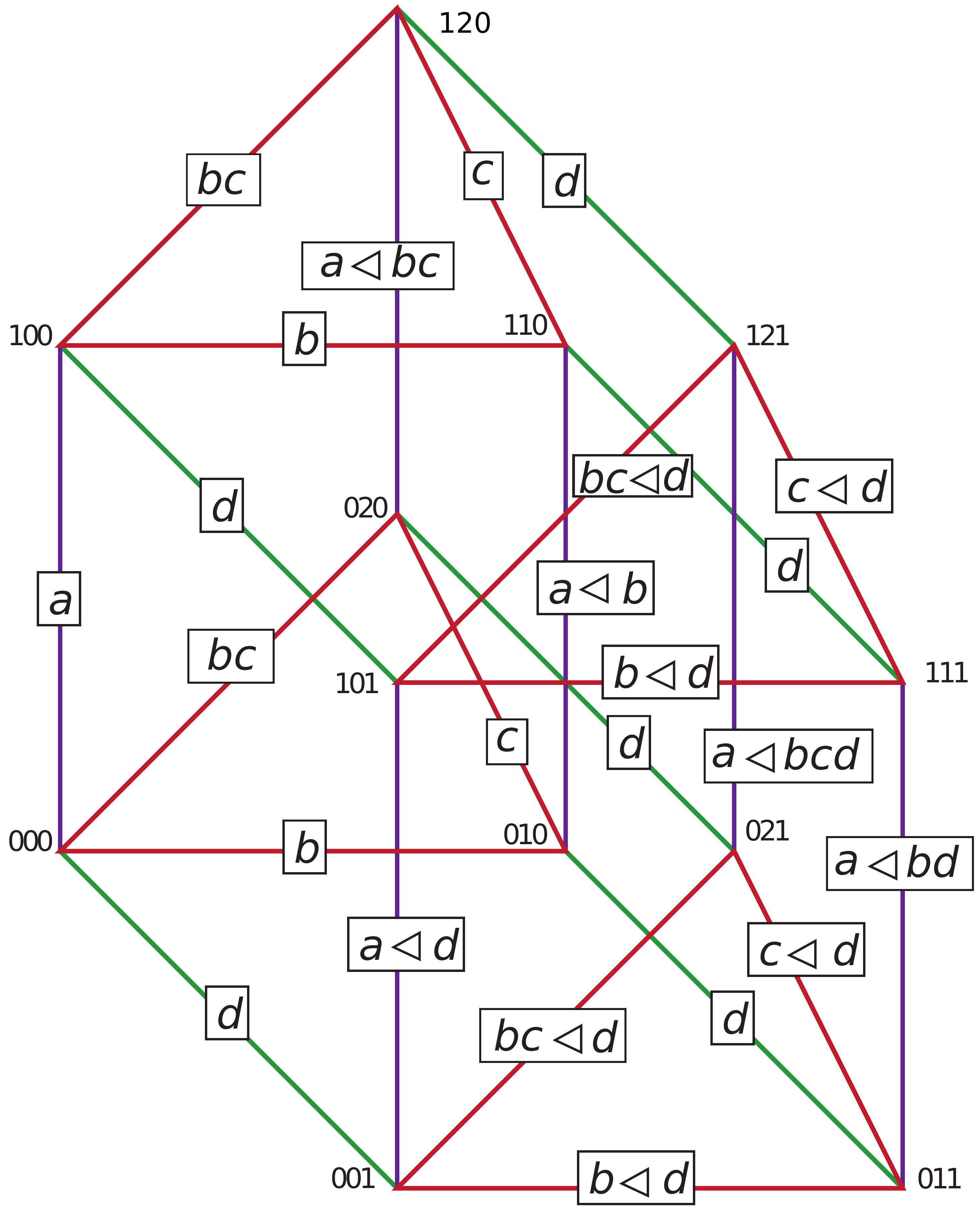}
\end{center}
\caption{The colored $\Delta^1 \times \Delta^3$ with labels $a|(b,c,d)$ (left) and the colored $\Delta^1 \times \Delta^2 \times \Delta^1$ with labels $a|(b,c)|d$ (right)}\label{coloredIA}
\end{figure}


\begin{figure}
\begin{center}
\includegraphics[width=6.5cm]{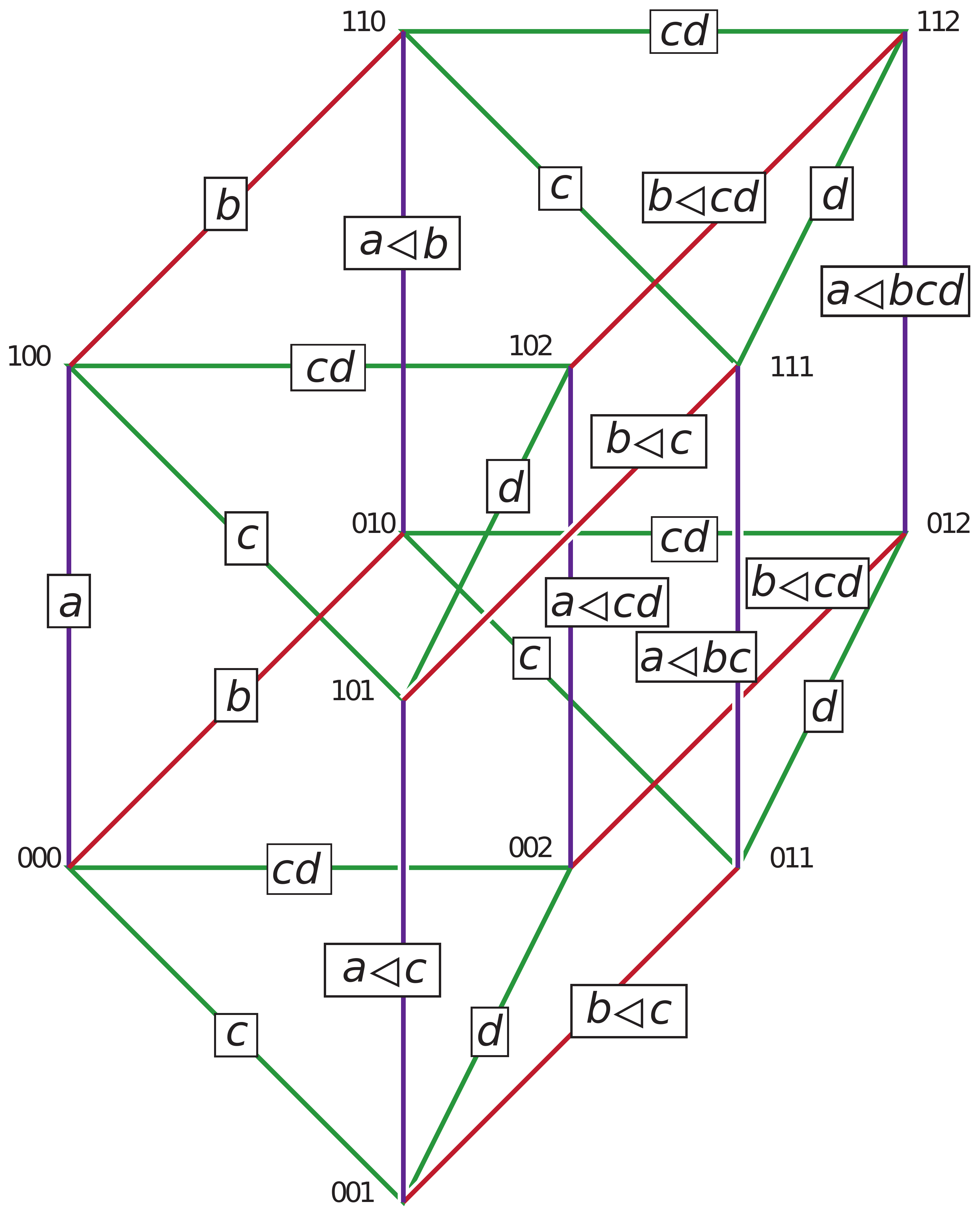}
\end{center}
\caption{The colored $\Delta^1 \times \Delta^1 \times \Delta^2$ with labels $a|b|(c,d)$}\label{coloredIIY}
\end{figure}

The $1$-skeleton of the colored hypercube is indicated in Fig.~\ref{coloredhypercube}.
It is labeled by quadruples of the form $a|b|c|d$. Its boundary consists of eight $3$-dimensional cubes whose induced labels are those predicted by the boundary formula $\partial (a|b|c|d)$ in Section~\ref{S:PrismHom}. The labels that are indicated upon the edges should aid the reader to supply labels for the $2$-dimensional square faces and $3$-dimensional cubical solids.

The remaining six prismatic sets have their $1$-skeletons illustrated in Figures~\ref{coloredAI} through \ref{coloredIIY}. We encourage the diligent reader to discover the various $3$-dimensional faces of these and match them with the boundaries of the corresponding chains $(a,b,c)|d$, $(a,b)|(c,d)$, $(a,b)|c|d$, $a|(b,c,d)$, $a|(b,c)|d$, and $a|b|(c,d)$.

\subsection{Higher dimensional prisms}\label{SS:n}

By this point the reader might have developed an intuition of what the cells of dimension $n$ should look like: they are products of simplices with $G$-labeled edges, all the labels being determined by an $n$-tuple of elements of $G$. More rigorously, the \emph{classifying space of a shalgebra $(G,\cdot,\lt)$} is defined as follows:
\[BG=\coprod_{n \geq 0}\,\coprod_{\overline{k} \in P_n}\, G^{n} \times \Delta^{\overline{k}} \raisebox{-.15cm}{$\Big/$} \raisebox{-.3cm}{$\sim$}\, .\]
Here
\begin{itemize}
\item $P_n$ is the set of all ordered partitions ${\overline{k}}=(k_1,\ldots,\k_\ell)$ of $n$, i.e., $k_1+\cdots +\k_\ell =n$, $\ell \in \N$, $k_i \geq 1$;
\item $\Delta^{(k_1,\ldots,\k_\ell)} := \Delta^{k_1} \times \Delta^{k_2} \times \cdots \times \Delta^{k_\ell}$.
\end{itemize}
In the remainder of this section we will define the identification $\sim$, which glues all these generalized prisms together, and show that the homology of $BG$ computes the prismatic homology of $G$.

Given a bracketed $n$-tuple
\[\overline{g}:=(g_1, \ldots, g_{k_1})|\ldots |(g_{s +1}, \ldots, g_{s+k_j})|\ldots |(g_{t+1}, \ldots, g_n) \in G^n,\]
where $s=\sum_{i=1}^{{j-1}}k_i$ and $t=\sum_{i=1}^{{\ell-1}}k_i$, we interpret $(\overline{g}, \Delta^{\overline{k}})$ as the generalized prism $\Delta^{\overline{k}}$ with $G$-labeled edges. Concretely, with the notations from the description of the simplices $\Delta^m$ given in Section~\ref{S:SimplicesCubes}, the edge
\[\{p_1\} \times \cdots \times \{p_{q-1}\} \times (p_q \to p'_q) \times \{p_{q+1}\} \times \cdots \times \{p_{\ell}\}, \qquad p_q < p'_q\]
receives the label
\begin{equation}\label{E:LabelingRule}
(g_{p_q+1} \cdots g_{p'_q}) \lt (g_{q+1;p_{q+1}}\cdots g_{\ell;p_{\ell}}),
\end{equation}
where $g_{u;p_{u}} = g_{s+1} \cdots g_{s+p_{u}}$, and $s=\sum_{t=1}^{{u-1}}k_t$. A $G$-labeling constructed in this manner is called \emph{good}. The bracketed $n$-tuple $\overline{g}$ is called the \emph{label} of the generalized prism $\Delta^{\overline{k}}$ endowed with a good $G$-labeling as above.

Good labelings are best understood inductively. Indeed, let us explain how to label $((\overline{g},\overline{h}), \Delta^{\overline{k}} \times \Delta^m)$ if you know what to do with $(\overline{g}, \Delta^{\overline{k}})$. Here $\overline{k} \in P_n$, $\overline{g} \in G^n$, $\overline{h} \in G^m$. Put a copy of $\Delta^{\overline{k}}$ at each vertex of the simplex $\Delta^m$, which you label by $\overline{h}$ as described in Section~\ref{S:SimplicesCubes}. Further, replace each edge $i \to j$ of $\Delta^m$ by $n+1$ parallel copies with the same $G$-label; the $p$th copy ($p=0,1,\ldots, n$) connects the $p$th vertices of the copies of $\Delta^{\overline{k}}$ placed at the vertices $i$ and $j$ of $\Delta^m$. Finally, label the copy of $\Delta^{\overline{k}}$ placed at the vertex $i$ by $\overline{g} \lt (h_1 \cdots h_i)$, which you know how to do by assumption; recall that the $\lt$-action of $G$ on itself extends to $G^n$ diagonally. Fig.~\ref{F:LabelsInductively} illustrates this inductive step; a double arrow represents here six parallel copies of the same $G$-labeled arrow. It is easy to verify (for instance by induction) that the explicit and the inductive labeling rules yield the same result.

\begin{figure}
\begin{center}
\includegraphics[width=6cm]{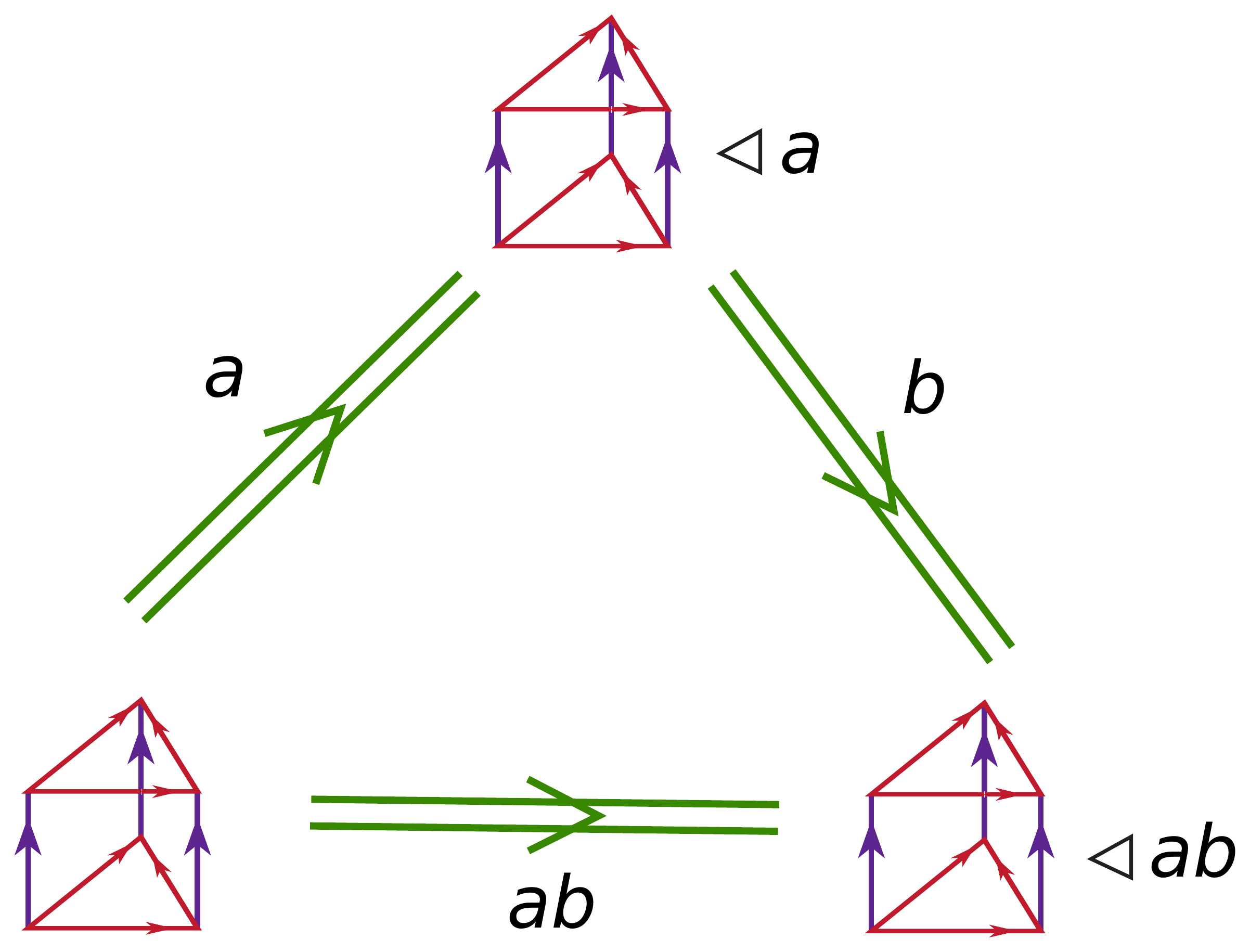}
\end{center}
\caption{Inductive labeling of generalized prisms}\label{F:LabelsInductively}
\end{figure}

\begin{lemma}\label{L:LabelsInductively}
If an edge of the labeled prism $(\overline{g}, \Delta^{\overline{k}})$ receives the label $a \in G$, then, for any $b \in G$, the same edge receives the label $a \lt b$ in $(\overline{g} \lt b, \Delta^{\overline{k}})$.
\end{lemma}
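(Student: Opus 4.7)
The plan is to verify the claim directly from the explicit labeling rule~\eqref{E:LabelingRule}, since that rule gives a closed-form expression for the label on every edge in terms of the entries of $\overline{g}$. I will substitute $g_i \mapsto g_i \lt b$ throughout the formula and show that the result is precisely the original label acted upon by $\lt b$. The only shalgebra axioms I will need are $\Y\I$ (distributivity of $\lt$ over $\cdot$) and $\I\I\I$ (self-distributivity); associativity $\A$ and the exponential law $\I\Y$ enter implicitly in justifying that the unparenthesised products in~\eqref{E:LabelingRule} make sense and behave as expected.

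The first step is to observe that for any word $g_{i_1}\cdots g_{i_r}$ in the entries of $\overline{g}$, repeated application of $\Y\I$ yields
\[(g_{i_1}\lt b)\cdots (g_{i_r}\lt b) = (g_{i_1}\cdots g_{i_r})\lt b.\]
Applying this to each of the subwords appearing in~\eqref{E:LabelingRule}---namely $g_{p_q+1}\cdots g_{p'_q}$ and each $g_{u;p_u}=g_{s+1}\cdots g_{s+p_u}$, and then to the concatenation $g_{q+1;p_{q+1}}\cdots g_{\ell;p_\ell}$---shows that after the substitution $g_i\mapsto g_i\lt b$ the edge label becomes
\[\bigl((g_{p_q+1}\cdots g_{p'_q})\lt b\bigr)\lt \bigl((g_{q+1;p_{q+1}}\cdots g_{\ell;p_\ell})\lt b\bigr).\]

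The second step is to collapse the two outer occurrences of $\lt b$ into one. Writing $A = g_{p_q+1}\cdots g_{p'_q}$ and $C = g_{q+1;p_{q+1}}\cdots g_{\ell;p_\ell}$, axiom $\I\I\I$ with the substitution $(a,b,c)\mapsto (A,C,b)$ reads $(A\lt C)\lt b = (A\lt b)\lt (C\lt b)$. Thus the expression above equals $(A\lt C)\lt b$, which is exactly the original label $a$ of~\eqref{E:LabelingRule} postcomposed with $\lt b$. This gives the claim.

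The main ``obstacle'' is purely bookkeeping: one has to recognise that the two $\lt b$'s produced by the substitution sit at exactly the positions governed by $\I\I\I$, and that $\Y\I$ is strong enough to factor $\lt b$ through every product appearing in~\eqref{E:LabelingRule}. As a sanity check, I would verify the inductive description of Fig.~\ref{F:LabelsInductively} is consistent with this: for the $\Delta^m$-edges joining vertices $i$ and $j$, the new label $(h_{i+1}\lt b)\cdots(h_j\lt b)$ collapses to $(h_{i+1}\cdots h_j)\lt b$ by $\Y\I$; for an edge inside the copy of $\Delta^{\overline{k}}$ sitting at vertex $i$, the recursive label $(\overline{g}\lt b)\lt ((h_1\cdots h_i)\lt b)$ simplifies, again by $\I\I\I$, to $(\overline{g}\lt (h_1\cdots h_i))\lt b$, so an induction on $\ell$ recovers the same statement. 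Either route is short, and neither uses axioms $\I\I$, $\I$ or $\T$, so the lemma is valid for every shalgebra.
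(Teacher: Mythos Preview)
Your proof is correct and follows precisely the same route as the paper's: both invoke the explicit labeling rule~\eqref{E:LabelingRule} and observe that, since it is built from $\cdot$ and $\lt$ alone, Axioms $\Y\I$ and $\I\I\I$ let the $\lt b$-action commute past both operations. You have simply written out in detail what the paper compresses into a single sentence.
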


\begin{proof}
The labeling rule \eqref{E:LabelingRule} involves only shalgebra operations, and, by Axioms $\Y\I$ and $\I\I\I$, the $\lt$-action intertwines with both operations.
\end{proof}

\begin{lemma}\label{L:LabelsInductively2}
In the inductive labeling procedure above, the labels of the corresponding edges in the copies of $\Delta^{\overline{k}}$ placed at the vertices $i$ and $j$ of $\Delta^m$, with $i < j$, differ by the $\lt$-action of the label of the edge $i \to j$ in $(\overline{h},\Delta^m)$.
\end{lemma}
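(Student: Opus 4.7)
The plan is to reduce the statement to a direct application of Lemma~\ref{L:LabelsInductively} together with the exponential law (Axiom~$\I\Y$) and associativity (Axiom~$\A$). Recall from the inductive labeling construction that the copy of $\Delta^{\overline{k}}$ placed at vertex $i$ of $\Delta^m$ is assigned the label $\overline{g} \lt (h_1 \cdots h_i)$, and the one at vertex $j$ is assigned $\overline{g} \lt (h_1 \cdots h_j)$. So the setup is entirely controlled by the way labels on a single copy $(\overline{g}, \Delta^{\overline{k}})$ transform when $\overline{g}$ is acted on by an element of $G$, which is precisely the content of the preceding lemma.

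Concretely, suppose a given edge of $(\overline{g}, \Delta^{\overline{k}})$ receives the label $a \in G$. Taking $b = h_1 \cdots h_i$ in Lemma~\ref{L:LabelsInductively}, the corresponding edge of the copy placed at vertex $i$ receives the label $a \lt (h_1 \cdots h_i)$; likewise the corresponding edge of the copy at vertex $j$ receives $a \lt (h_1 \cdots h_j)$. By the simplicial labeling rule of Section~\ref{S:SimplicesCubes}, the edge $i \to j$ of $(\overline{h}, \Delta^m)$ carries the label $h_{i+1} \cdots h_j$. Hence the assertion of the lemma amounts to the identity
\[
\bigl(a \lt (h_1 \cdots h_i)\bigr) \lt (h_{i+1} \cdots h_j) \;=\; a \lt (h_1 \cdots h_j),
\]
which is the exponential law $\I\Y$ applied to $a$, $h_1 \cdots h_i$, $h_{i+1} \cdots h_j$, after rewriting $(h_1 \cdots h_i)(h_{i+1} \cdots h_j) = h_1 \cdots h_j$ via associativity.

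There is no substantial obstacle here; the only point to verify is that Lemma~\ref{L:LabelsInductively} may be invoked with $b$ equal to a product of several elements rather than a single generator, but the lemma is stated for an arbitrary $b \in G$, so nothing extra is needed. In short, Lemma~\ref{L:LabelsInductively} handles the transfer of labels between copies, and the exponential law compatibly glues successive transfers into a single one.
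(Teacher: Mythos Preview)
Your proof is correct and follows essentially the same route as the paper's own argument: invoke Lemma~\ref{L:LabelsInductively} to identify the edge labels at vertices $i$ and $j$ as $a \lt (h_1 \cdots h_i)$ and $a \lt (h_1 \cdots h_j)$, recall that the edge $i \to j$ carries $h_{i+1}\cdots h_j$, and then conclude via Axioms~$\A$ and~$\I\Y$. The paper presents exactly this computation.
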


\begin{proof}
Let $a$ be the color of this edge in $(\overline{g}, \Delta^{\overline{k}})$. By Lemma~\ref{L:LabelsInductively}, in the copy of $\Delta^{\overline{k}}$ placed at the vertices $i$ and $j$ the same edge receives the labels $a_i:=a \lt (h_1 \cdots h_i)$ and $a_j:=a \lt (h_1 \cdots h_j)$ respectively. By our labeling rules for simplices, the edge $i \to j$ is labeled by $h_{i,j}:=h_{i+1} \cdots h_j$ in $(\overline{h},\Delta^m)$. But then, by Axioms $\A$ and $\I\Y$,
\begin{align*}
a_j&=a \lt (h_1 \cdots h_j)=a \lt ((h_1 \cdots h_i)(h_{i+1} \cdots h_j))\\
&= (a \lt (h_1 \cdots h_i)) \lt (h_{i+1} \cdots h_j)= a_i \lt h_{i,j},
\end{align*}
as desired.
\end{proof}

With these two lemmas in mind, one can envision that in $(\overline{g}, \Delta^{\overline{k}})$ each $g_i$ determines the labels in the $i$th cardinal direction. The dimension $4$ examples above should make this idea clear.

Now, a good labeling of $\Delta^{\overline{k}}$ induces a $G$-labeling of the edges of all its $(n-1)$-dimensional faces, which are also generalized prisms. Below we will prove that this induced labeling is good. The relation $\sim$ is then defined as the identification of these faces with the corresponding cells in $G^{n-1} \times \Delta^{\overline{k'}}$ for suitable $\overline{k'}$.

To manipulate induced labelings of faces, we need to introduce notations for all components of the restriction $\partial_{\overline{k}}$ of the prismatic differential $\partial_n$ from Section~\ref{S:PrismHom} to the summand $\Z[G^{\overline{k}}]$ of~$C_n$. Here
\[G^{\overline{k}} = G^{k_1}\times G^{k_2} \times \cdots \times G^{k_\ell}.\]
First, rewrite Equation~\eqref{E:PrismHom} defining the part $\partial_{j;k_j}$ of the prismatic differential affecting the $j$th factor $G^{k_j}$ of $G^{\overline{k}}$ as
\[\partial_{j;k_j} = \sum_{i=0}^{k_j} (-1)^i d^{k_j}_{j;i}.\]
The component $d^{k_j}_{j;i}$ bears on the entries $i$ and $i+1$ of $G^{k_j}$, and anything to the left of $G^{k_j}$ if $i=0$. 
This component sends $G^{\overline{k}}$ to $G^{\overline{k}-1_j}$, where
\[\overline{k}-1_j:=\begin{cases} k_1+\cdots+(k_j-1)+\cdots +\k_\ell, & k_j>1\\
k_1+\cdots+k_{j-1}+k_{j+1}+\cdots +\k_\ell, & k_j=1.\end{cases}\]
As usual, we will often omit the sub- or superscript $k_j$. Next, the Leibniz rule translates as
\[\partial_{\overline{k}} = \sum_{j=1}^{\ell} (-1)^{k_1+\cdots+k_{j-1}} \partial_{j;k_j} = \sum_{j=1}^{\ell} \sum_{i=0}^{k_j} (-1)^{k_1+\cdots+k_{j-1}+i} d_{j;i}.\]

\begin{theorem}
Take a partition $n=k_1+\cdots +\k_\ell$ and a correspondingly partitioned $n$-tuple $\overline{g} \in G^{\overline{k}}$. Then the boundary of the $G$-labeled generalized prism $(\overline{g}, \Delta^{\overline{k}})$ as defined above consists of $(n-1)$-dimensional cells labeled by $(d_{j;i}(\overline{g}),\Delta^{\overline{k}-1_j})$ with orientation determined by the sign $(-1)^{k_1+\cdots+k_{j-1}+i}$; here $j=1,2,\ldots,\ell$, and $i=0,1,\ldots,k_j$.
\end{theorem}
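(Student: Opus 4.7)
I proceed by induction on the number $\ell$ of factors in the partition $\overline{k}=(k_1,\ldots,k_\ell)$. For $\ell=1$ the prism $\Delta^{\overline{k}}=\Delta^n$ is a single simplex carrying the semigroup labeling from Section~\ref{S:SimplicesCubes}; its geometric boundary consists of $n+1$ faces, the $i$-th obtained by deleting the $i$-th vertex, with canonical sign $(-1)^i$, and the induced labeling is precisely $d_{1;i}(\overline{g})$ by the classical bar-complex computation.

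For the inductive step, write $\Delta^{\overline{k}}=\Delta^{\overline{k'}}\times\Delta^m$ with $\overline{k'}=(k_1,\ldots,k_{\ell-1})$, $m=k_\ell$, and $\overline{g}=(\overline{g}',\overline{h})$. The geometric boundary of a product decomposes as
\[\partial(\Delta^{\overline{k'}}\times\Delta^m)\;=\;(\partial\Delta^{\overline{k'}})\times\Delta^m\;\cup\;(-1)^{n'}\Delta^{\overline{k'}}\times(\partial\Delta^m),\]
with $n'=k_1+\cdots+k_{\ell-1}$. I treat the two pieces separately, using the inductive description of good labelings illustrated in Fig.~\ref{F:LabelsInductively}. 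For faces in the first piece (indices $j<\ell$), the inductive hypothesis gives the face of $\Delta^{\overline{k'}}$ the label $d_{j;i}(\overline{g}')$ with sign $(-1)^{k_1+\cdots+k_{j-1}+i}$; crossing with the unchanged $(\overline{h},\Delta^m)$ preserves label and sign, producing $(d_{j;i}(\overline{g}'),\overline{h})=d_{j;i}(\overline{g})$ as required.

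For faces in the second piece (index $j=\ell$), I split into three subcases for $i$. If $1\le i\le m-1$, the face $\partial_i\Delta^m$ retains all vertices except the $i$-th; after canonical vertex relabeling the labels $\overline{g}'\lt(h_1\cdots h_p)$ at the retained copies match a good labeling with $\overline{h}$ replaced by $(h_1,\ldots,h_ih_{i+1},\ldots,h_m)$, giving $d_{\ell;i}(\overline{g})$. If $i=m$, the face drops the last vertex and $\overline{h}$ becomes $(h_1,\ldots,h_{m-1})$, giving $d_{\ell;m}(\overline{g})$. The delicate case $i=0$ requires relabeling each remaining vertex $p+1$ as $p$; the copy of $\Delta^{\overline{k'}}$ formerly at vertex $p+1$ carries the label
\[\overline{g}'\lt(h_1\cdots h_{p+1})\;=\;(\overline{g}'\lt h_1)\lt(h_2\cdots h_{p+1}),\]
by Axiom~$\I\Y$, so the face inherits a good labeling with $\overline{g}'$ replaced by $\overline{g}'\lt h_1$ and $\overline{h}$ by $(h_2,\ldots,h_m)$, which is exactly $d_{\ell;0}(\overline{g})$. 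In every case the induced sign is $(-1)^{n'+i}=(-1)^{k_1+\cdots+k_{\ell-1}+i}$, as claimed. Goodness of the labeling on each face follows from Lemmas~\ref{L:LabelsInductively} and \ref{L:LabelsInductively2}, which supply the required $\lt$-compatibility between neighbouring copies of $\Delta^{\overline{k'}}$.

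\textbf{Main obstacle.} The bulk of the argument is bookkeeping of indices and signs; the one genuinely algebraic point is the $i=0$ subcase, where recognising the face as a good labeling forces the first factor to be acted on by $\lt h_1$. The displayed rearrangement above, which is exactly Axiom~$\I\Y$ applied diagonally to $\overline{g}'$, is what makes this work and explains why this axiom is indispensable in the inductive construction of good labelings.
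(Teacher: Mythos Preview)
Your proof is correct, but it follows a different route from the paper's. The paper argues directly from the explicit labeling rule~\eqref{E:LabelingRule}: it writes down the geometric boundary of $\Delta^{\overline{k}}$ all at once (no induction on $\ell$), and then checks for each face $\delta_{j;i}(\Delta^{\overline{k}})$ that the induced edge labels match those dictated by $d_{j;i}(\overline{g})$, spelling out only the case $i=0$ (for arbitrary $j$) and declaring the others straightforward. You instead induct on $\ell$, exploiting the recursive description of good labelings and the product boundary formula; faces with $j<\ell$ are absorbed into the inductive hypothesis together with Lemma~\ref{L:LabelsInductively}, and only the last-factor faces $j=\ell$ need explicit treatment. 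Both arguments isolate $i=0$ as the one genuinely algebraic step, but you reduce it to a clean application of Axiom~$\I\Y$ on the diagonal $\lt$-action, whereas the paper manipulates the explicit formula and invokes several shalgebra axioms simultaneously. Your approach is more structural (it mirrors the Leibniz rule that defines $\partial$) and makes essential use of the paper's preparatory lemmas; the paper's approach is more hands-on but self-contained. One small note: your sentence ``crossing with the unchanged $(\overline{h},\Delta^m)$ preserves label and sign'' hides the verification that the face of $(\overline{g}'\lt(h_1\cdots h_p),\Delta^{\overline{k'}})$ carries the good labeling determined by $d_{j;i}(\overline{g}')\lt(h_1\cdots h_p)$; your closing appeal to Lemma~\ref{L:LabelsInductively} does cover this, but it would be clearer to say so at that point.
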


The theorem directly implies that the relation $\sim$ is well defined, and that the space $BG$ has the same homology as the shalgebra $G$:
\[H_n^{\rm P}(G, \cdot, \lt) \cong H(BG).\]

\begin{proof}
Ignoring the $G$-labels, one can easily compute the geometric boundary of the product of simplices $\Delta^{\overline{k}}$:
\begin{align*}
\delta(\Delta^{\overline{k}}) &=\bigcup_{j=1}^{\ell} (-1)^{k_1+\cdots+k_{j-1}} \Delta^{k_1} \times \cdots \times \delta(\Delta^{k_j})\times \cdots \times \Delta^{k_{\ell}},\\
\delta(\Delta^{k_j}) &= \bigcup_{i=0}^{k_j} (-1)^{i}\Delta^{k_j;\hat{i}},
\end{align*}
where $\Delta^{k_j;\hat{i}}$ is the face of $\Delta^{k_j}$ which does not contain the vertex $i$.

It remains to check that the $G$-labeling on the edges of
\[\delta_{j;i}(\Delta^{\overline{k}}):=\Delta^{k_1} \times \cdots \times \Delta^{k_j;\hat{i}}\times \cdots \times \Delta^{k_{\ell}}\]
induced by $\overline{g}$ coincides with the good labeling constructed from $\overline{g'}:=d_{j;i}(\overline{g})$. We will treat only the case $i=0$ here; the remaining cases $0 < i < k_j$ and $i=k_j$ are straightforward.

In the case $i=0$, for edges from the face $\delta_{j;i}(\Delta^{\overline{k}})$, the term $g_{j;p_{j}}$ appears in the labeling rule~\eqref{E:LabelingRule} only with $p_{j} > 0$. So, $g_{j;p_{j}} = a g'_{j;p_{j}-1}$, where $a:=g_{k_1+\cdots+k_{j-1}+1}$. Using the shalgebra axioms, one can then replace $g_{j;p_{j}}$ with $g'_{j;p_{j}-1}$ at the cost of $\lt$-acting by~$a$ on all the elements of $G$ appearing in~\eqref{E:LabelingRule} to the left of $g_{j;p_{j}}$. That is, all the $g_t$ with $t \leq k_1+\cdots+k_{j-1}$ should be replaced with $g_t \lt a$, which by the definition of $d_{j;0}$ is precisely $g'_t$. Also, for $t > k_1+\cdots+k_{j-1}$ one has $g'_t=g_{t+1}$, so for all $u>j$, one has $g'_{u;p_{u}} = g_{u;p_{u}}$, or $g'_{u-1;p_{u-1}} = g_{u;p_{u}}$ if $p_j=1$.  This proves that the two labelings we are considering coincide on the edges supported in $\Delta^{k_1} \times \cdots \times \Delta^{k_{j-1}}$. Since the removal of a vertex of $\Delta^{k_j}$ has no effect on the labelings of the edges supported in $\Delta^{k_j} \times \cdots \times \Delta^{k_{\ell}}$, we are done.
\end{proof}

Note that for the extreme partitions $n=n$ and $n=1+1+\cdots +1$, we recover copies of the classical classifying spaces of the semigroup $(G,\cdot)$ and the shelf $(G,\lt)$ respectively; cf. Section~\ref{S:SimplicesCubes}.

We finish with a useful property of good $G$-labelings. Take any oriented path consisting of edges of $\Delta^{\overline{k}}$, and suppose that the edge labelings encountered along this path are $a_1,\ldots,a_p$, in this order. Consider the map $G \to G$, $g \mapsto g \lt (a_1\cdots a_p)$. This map
\begin{itemize}
\item is a shalgebra morphism due to Axioms $\Y\I$ and $\I\I\I$;
\item depends only on the source and the target vertices of the path, and not on the path itself: indeed, one can construct a homotopy between any two such paths out of triangular and square $2$-dimensional faces only, and for the two paths forming the boundary of such faces the statement follows from Axioms $\A$ (for triangles) and $\I\Y$ and $\I\I\I$ (for squares).
\end{itemize}
Thus, denoting by $V$ the set of vertices of $\Delta^{\overline{k}}$, out of any good $G$-labeling of  $\Delta^{\overline{k}}$ we get a map
\[V \times V \to \End(G).\]
This construction is useful, for instance, for understanding how to $G$-label $\Delta^{\overline{k}}\times \Delta^{\overline{m}}$ if one knows how to label $\Delta^{\overline{k}}$ and $\Delta^{\overline{m}}$. If the partition $\overline{m}$ has one part only, this is precisely our inductive labeling.

\section{Knottings of dimension $1$ and $2$}\label{S:Knottings}

The main results of this paper relate prismatic homology to certain colored knottings. In this section we
\begin{itemize}
\item explain the difference between handle-body knots and knotted trivalent graphs;
\item discuss knotted foams which are the analogues in $4$-space of knotted trivalent graphs;
\item summarize the ideas of Reidemeister/Roseman moves to knotted foams.
\end{itemize}

\subsection{Handle-body knots}

A \emph{handle-body knot (HBK)} is an embedding into $\R^3$ (or $S^3$) of a $3$-dimensional manifold whose fundamental group is free (say on $g$ generators) and whose boundary is a closed orientable genus $g$ surface. Handle-body links are defined similarly: each component has a free fundamental group and a boundary that is a closed orientable surface. We will use the abbreviation HBK for either. Any HBK can be represented by the {\it diagram of a knotted trivalent graph (KTG)}. This is a generic projection into the plane of a trivalent graph that is embedded in $3$-space and for which crossing information is indicated at transverse crossings between pairs of arcs by breaking the arc that lies below  with respect to the projection direction in the standard way. By thickening the knotted trivalent graph in the direction of the plane into which it is projected and subsequently thickening in the projection direction, an HBK is constructed. By a theorem from \cite{IshiiHKnots}, two diagrams of KTGs represent the same HBK if and only if they are related by
 a sequence of the moves depicted in Fig~\ref{Rmoves}.

Note especially, that the move $\A$ changes the underlying topology of the representative trivalent graph. For example, the typical knotting of the handcuff graph that is depicted in Fig.~\ref{handcuffs} is non-trivial as a KTG, but by applying an $\A$ move, the underlying graph changes from a handcuff graph, $0\! \! -\!\!0,$ to a theta curve, $\uptheta$.

\begin{figure}[htb]
\begin{center}
\includegraphics[width=2in]{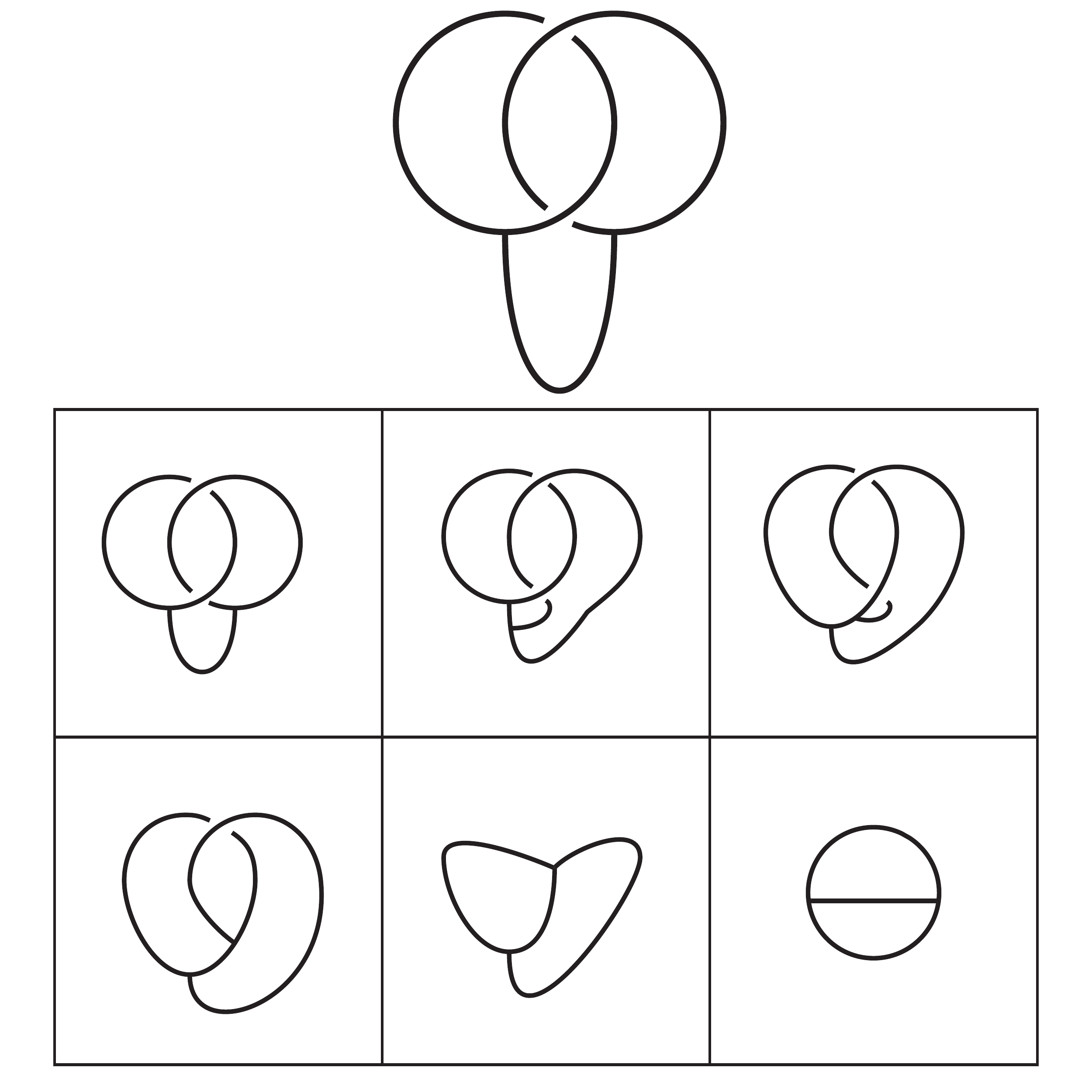}
\end{center}
\caption{A knotted trivalent graph that is unknotted as a handle-body}\label{handcuffs}
\end{figure}

\subsection{Knotted foams} In order to describe $2$-dimensional analogues of HBKs, we first describe the analogues of KTGs. Specifically, knotted foams (to be defined shortly) are to knotted surfaces as knotted trivalent graphs are to classical knots. To stretch the analogy, we consider a regular neighborhood in $3$-space of the diagram of the foam and subsequently thicken this in the direction of the projection. The resulting types of $4$-manifolds are unknown to authors. They are $4$-dimensional manifolds with boundaries that embed in $4$-space. Just as HBKs are related to cubes with knotted holes, one might presume a similar dichotomy in $4$ dimensions.

Let us define $2$-foams. Let $Y^1$ denote the graph in the triangle ($2$-simplex) $\Delta^2$ that is obtained by coning the points $(1/2,1/2,0)$,$(1/2,0,1/2)$, and $(0,1/2,1/2)$ to the barycenter $(1/3,1/3,1/3)$. For $j=0,1,2,3$, let $Y^1_j \subset \Delta^2_j= \partial_j(\Delta^2)$ denote a copy of $Y^1$ in the $j\/$th face of the tetrahedron $\Delta^3$. Any two such $Y^1_j\/$s share a boundary vertex. Let the $2$-complex $Y^2$ be obtained from  $\cup_{j=0}^3 Y^1_j$ by coning this $1$-complex to the barycenter $(1/4,1/4,1/4,1/4)$ of the tetrahedron. The space $Y^2$ (depicted in Fig.~\ref{A}) has one vertex at which four edges and six faces are incident.

\begin{figure}[htb]
\begin{center}
\includegraphics[width=.5\paperwidth]{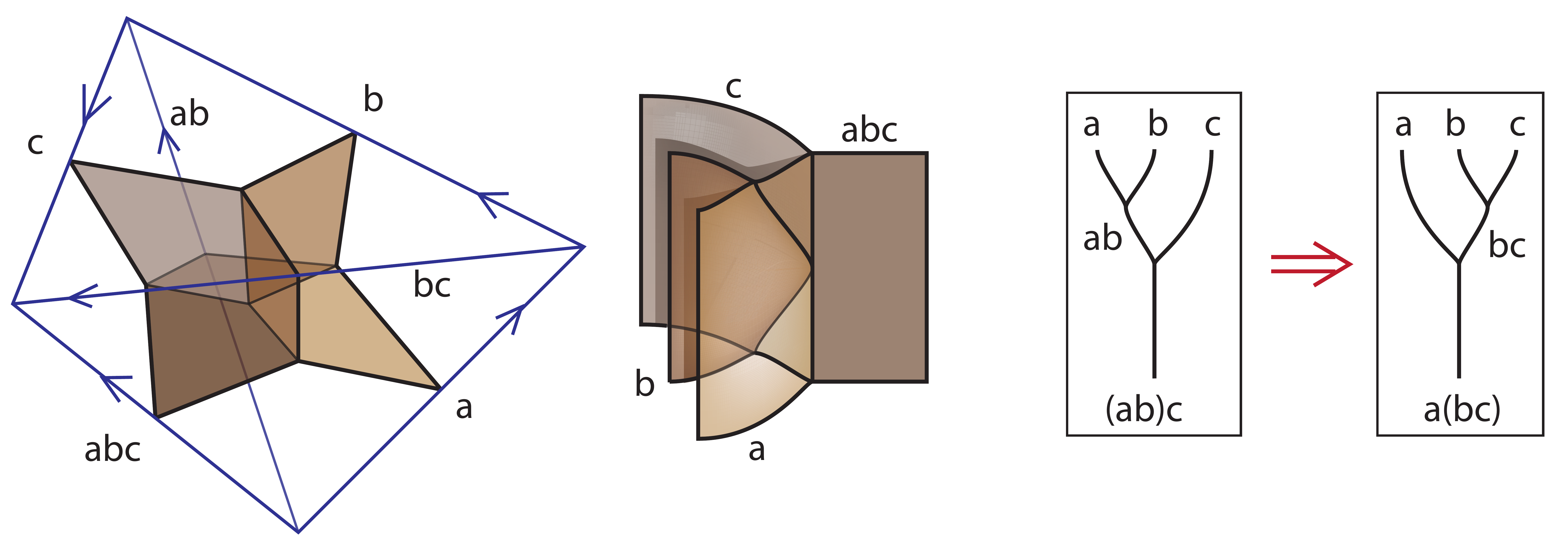}
\end{center}
\caption{The space $Y^2$ as Axiom $\A$}\label{A}
\end{figure}

A {\it $2$-foam}  (or simply a {\it foam}) is a compact orientable
 topological space $F$ for which each point has a neighborhood homeomorphic to a neighborhood of a point in $Y^2$. The boundary of a $2$-foam is a KTG. We are primarily interested in foams without boundary. There are three types of points in a $2$-foam. A {\it non-singular} point has a neighborhood that is homeomorphic to a $2$-dimensional disk. The open connected component of a non-singular point is called {\it a face}. A {\it seam} is an edge upon which three faces meet. A neighborhood of a seam point in $F$ is homeomorphic to the interior of $Y^1\times (0,1)$. A {\it vertex} is the junction of four edges and six faces; it has a neighborhood homeomorphic to the interior of $Y^2$. Of course, an edge or face might return to a given vertex, so this description is only a local picture.

A {\it knotted foam} is an embedding of a foam into $4$-space. Diagrammatic and movie techniques can be used to depict the local pictures of crossings of knotted foams. Most importantly, the moves $\A$, $\Y\I$, $\I\Y$, and $\I\I\I$ to KTGs define neighborhoods of crossings of foams. Specifically, the two sides of these moves form the boundary of the corresponding neighborhoods. Here, ``crossing'' should be interpreted in a broad sense. In the analogous situation for KTGs, ``crossings'' encompass vertices of the $Y^1\/s$ and the crossing of the form $X\/$. For knotted foams, ``crossings'' mean 
the vertices that are found at the junction of $Y^2\/$s, the crossings at which a transverse sheet passes over or under a $Y\times (0,1)$, and the triple points that occur at type $\I\I\I$ moves.  Local pictures of $3$-dimensional projections of crossings in these four situations are depicted in Figs. \ref{A}-\ref{III}. Their duality with the various prisms and their movie parametrizations are also shown.

\begin{figure}[htb]
\begin{center}
\includegraphics[width=3.6in, height=2.3in]{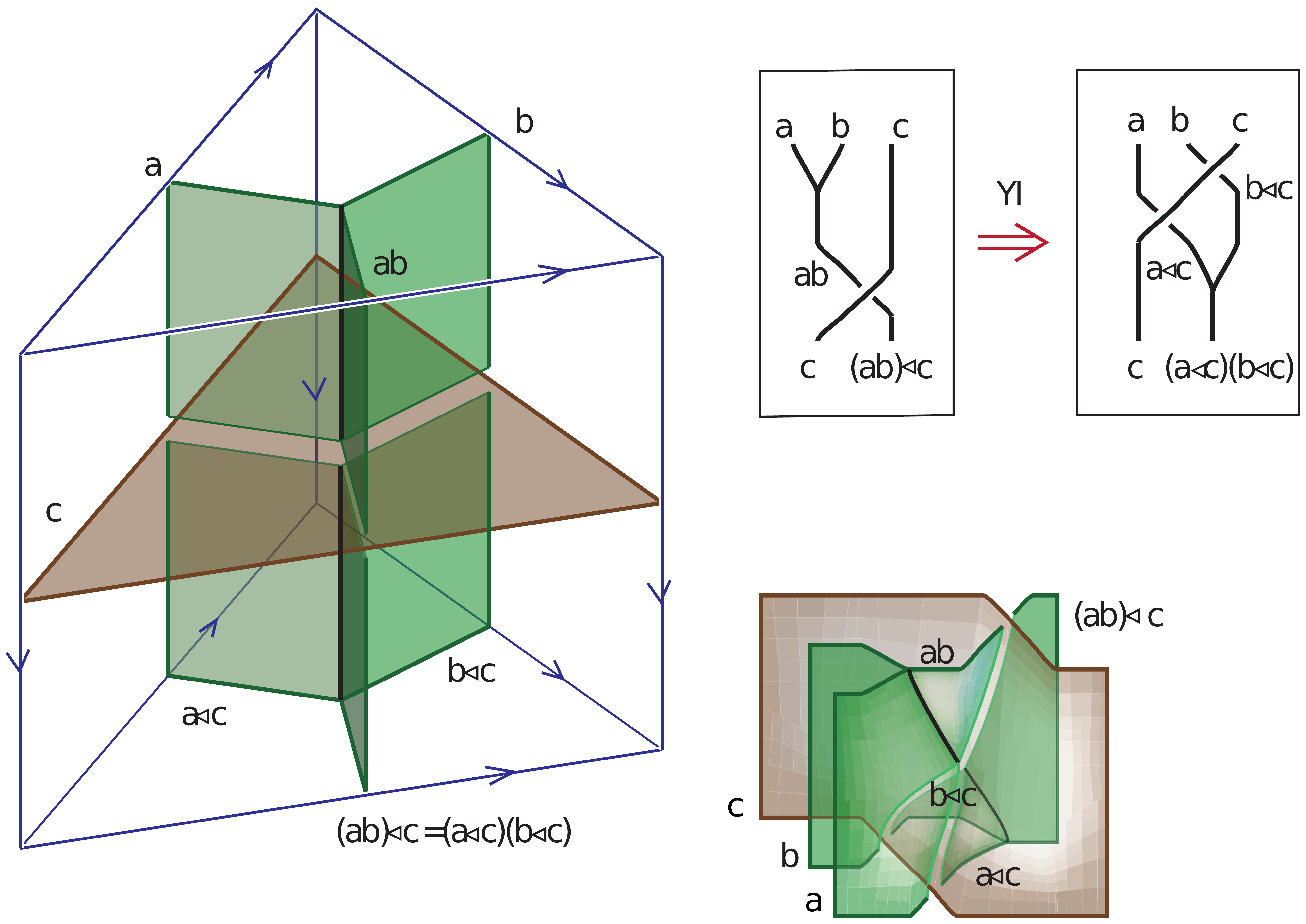}
\end{center}
\caption{The knotted foam corresponding to Axiom $\Y\I$}\label{YI}
\end{figure}

\begin{figure}[htb]
\begin{center}
\includegraphics[width=3.8in, height=2.3in]{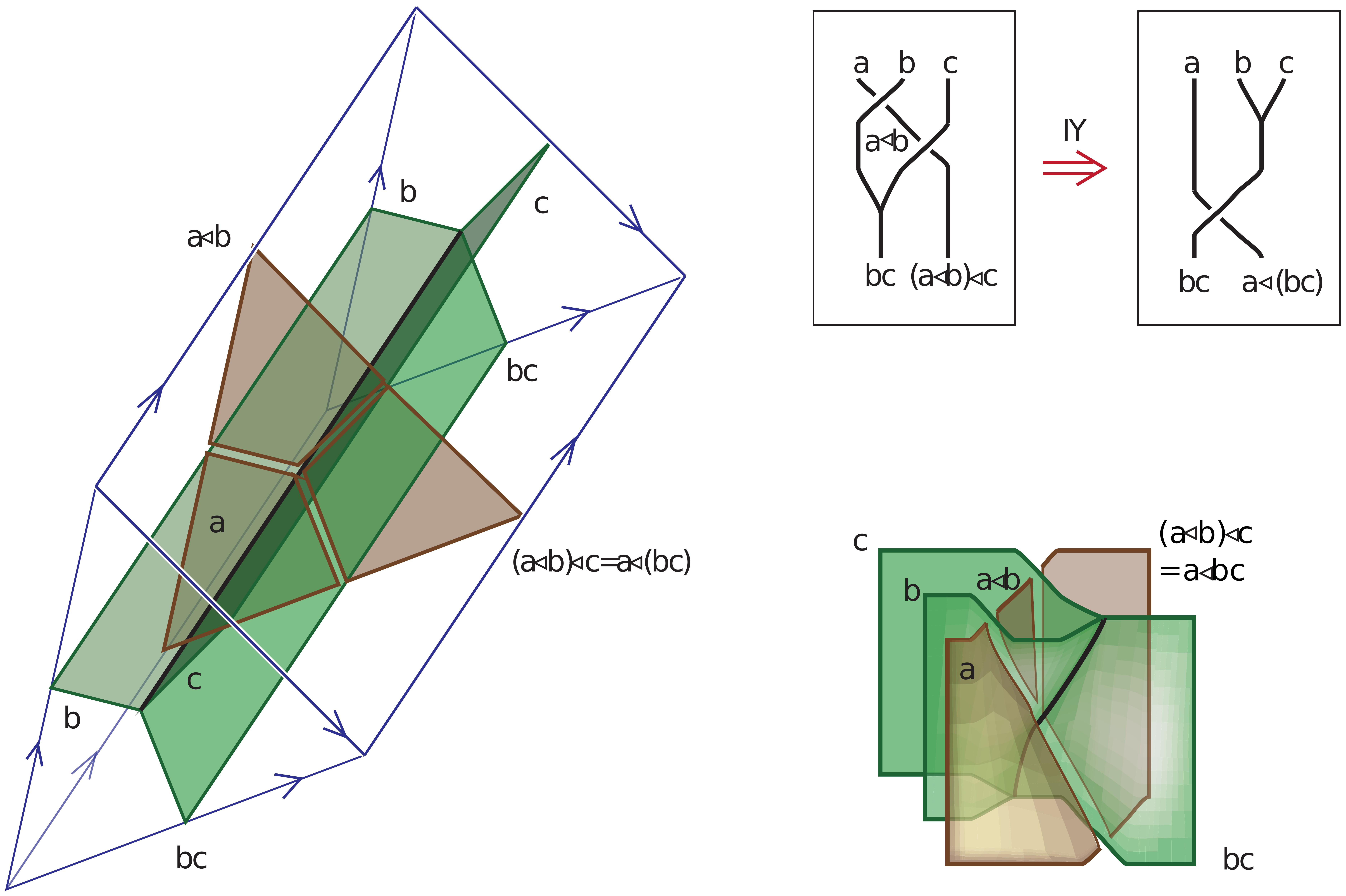}
\end{center}
\caption{The knotted foam corresponding to Axiom $\I\Y$}\label{IY}
\end{figure}

\begin{figure}[htb]
\begin{center}
\includegraphics[width=4in, height=2.3in]{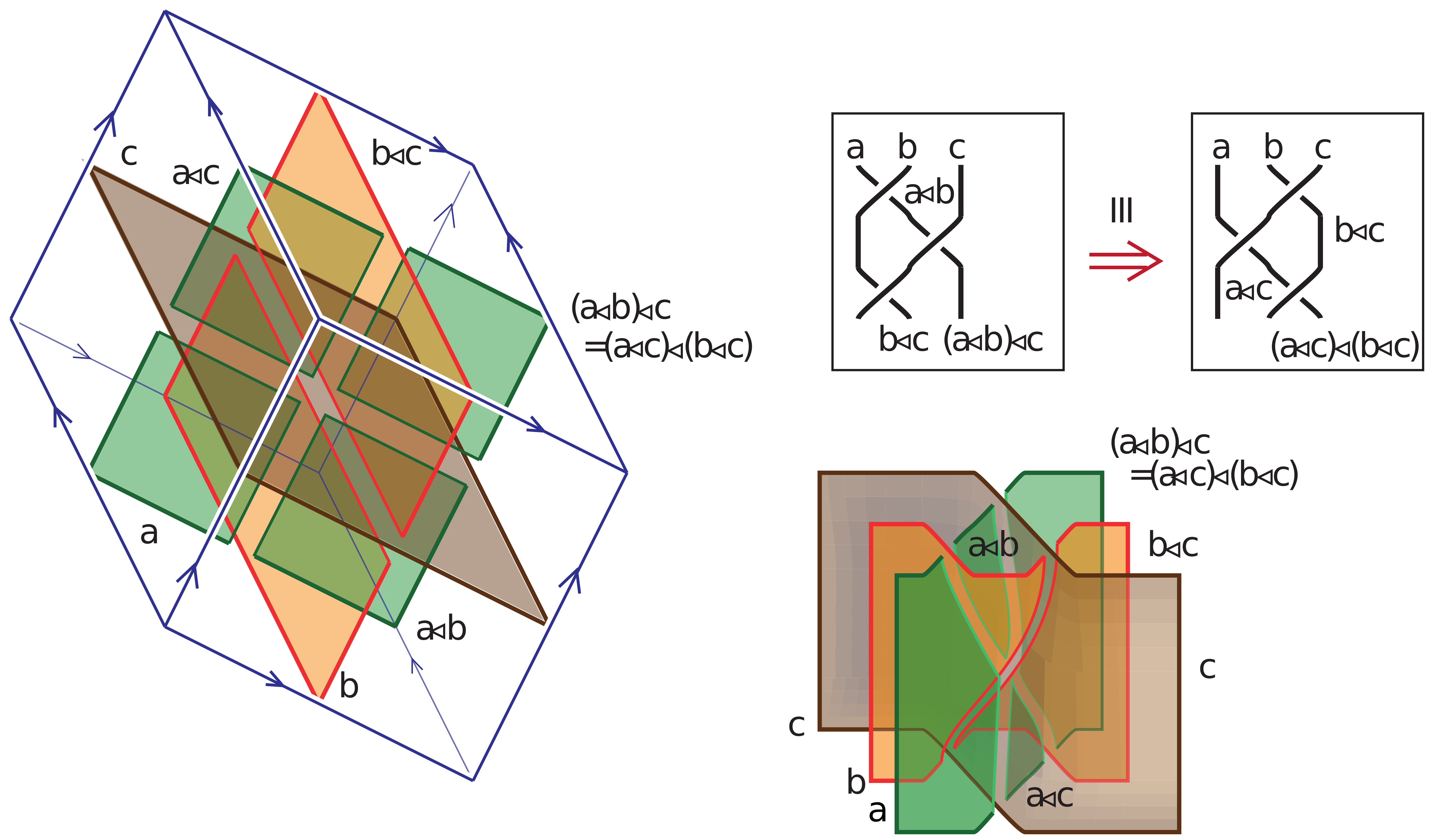}
\end{center}
\caption{The knotted foam corresponding to Axiom $\I\I\I$}\label{III}
\end{figure}

\subsection{Moves to foams}

A list of Reidemeister/Roseman moves for knotted foams in $4$-space was proposed in \cite{RR}. They are meant to be the $2$-dimensional analogue of the Reidemeister moves for KTGs.
 Unfortunately, this list is incomplete. In particular, the four versions of the twist vertex move (TVM) that are illustrated in Fig.~\ref{Twistandshout} are missing. Other moves may not have been found in \cite{RR}. Before addressing this disparity, we will briefly describe the TVMs.
  They each involve two possible ways in which a trivalent vertex can be twisted. A twist of a vertex involves two of the three legs at the vertex. The same result can be achieved by twisting another pair of legs and manipulating the vertex using Reidemeister moves of other types. The TVMs relate these two ways of twisting.
 We imagine that only two of the four moves are necessary, and the other two follow from them.

\begin{figure}[thb]
\begin{center}
\includegraphics[width=3.3in]{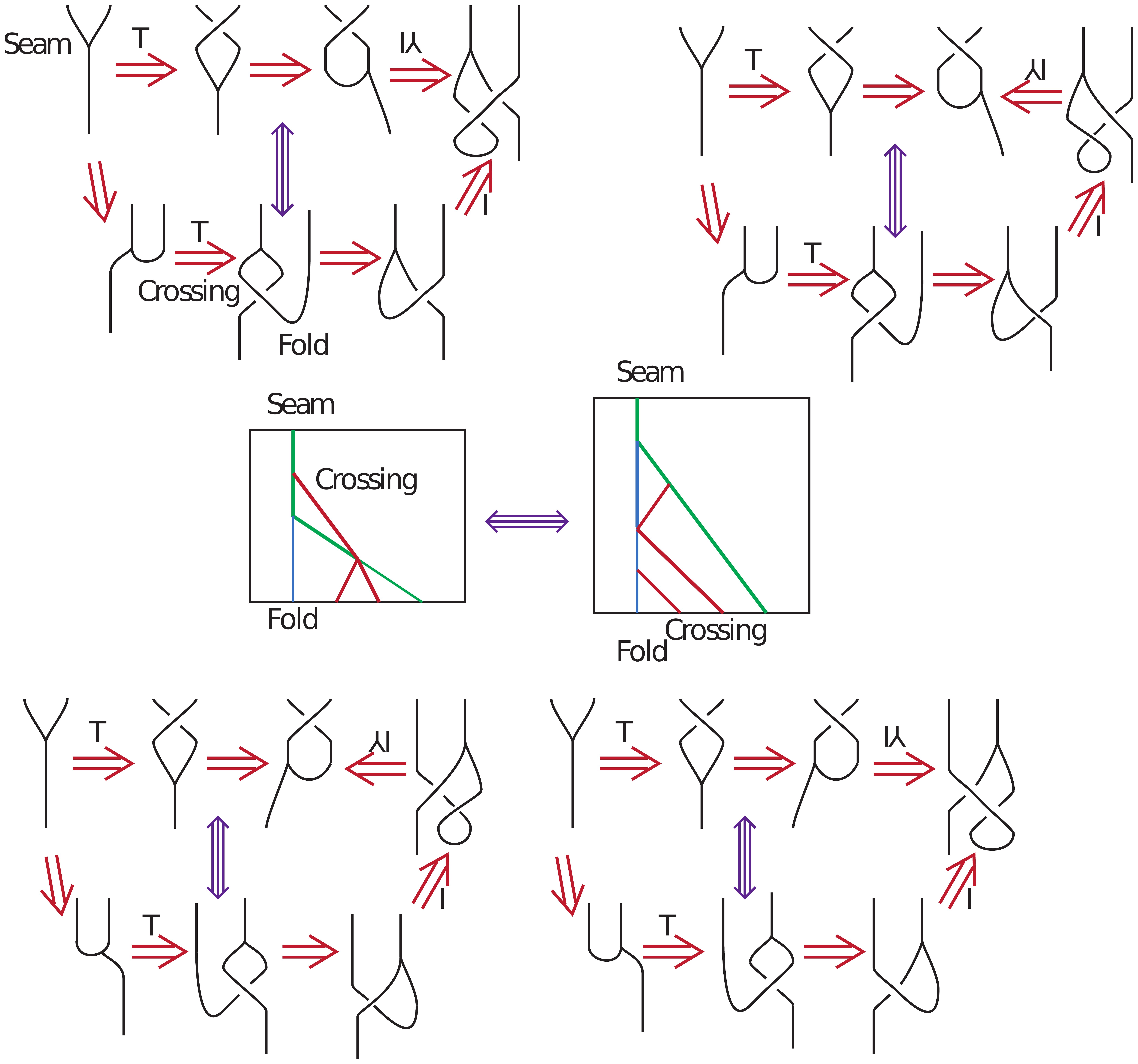}
\end{center}
\caption{The twist vertex moves for knotted foams}\label{Twistandshout}
\end{figure}

The imperfection in the result of \cite{RR} will not adversely affect our result here. Specifically, at the level of knotted foams one can restrict the notion of isotopy so that only certain moves are allowed. When a list of moves is specified, then cells are added to the
 classifying space $BG$ of a qualgebra that reflect these moves, as explained in Section~\ref{S:Degeneracies}. If additional moves are found to be necessary to fully describe isotopies of knotted foams, then additional ($4$-dimensional) cells can be added that reflect these moves.

Each qualgebra axiom corresponds to a move to the KTGs that represent HBKs. With the exception of Axiom $\A$, each describes an isotopy move for the KTG. In \cite{RR}, these isotopies are interpreted as ``atomic pieces'' of knotted foams. Many of the moves to foams can be quantified by asserting that these are strict isomorphisms. That is, each move is invertible. The invertibility of $\Y\I$, $\I\Y$, $\I\I\I$, $\I\I$, $\I$ and $\T$ account for eleven of the moves to foams
  that are given. At the level of the classifying space $BG$ that we construct, the invertibility of $\Y\I$, $\I\Y$, and $\I\I\I$ are encoded by the orientation of the corresponding prisms. The invertibility of moves $\I\I$, $\I$, and $\T$ are encoded by the orientations of some of the faces of these complexes.

Two more of the moves to foams are described in terms of naturality of crossings with respect to $2$-morphisms. These moves are described as a branch point or a twist vertex passing through a transverse sheet. They both represent degeneracies in the classifying space that we construct (Fig.~\ref{degeneratechains2} illustrates the degeneracies for TVMs).
In the case of passing a branch point through a transverse sheet, this is the move that corresponds to the declaration that the quandle chains $a|a|b$ and $a|b|b$ are degenerate. The triple point that arises in case the transverse sheet is over or under, respectively, can be removed, and the cube that is dual to the triple point has faces identified.
A similar situation happens at the twist vertex, but in this case, the faces of the induced cube and prism are degenerated.

The remaining seven moves to foams are encoded by the $4$-dimensional polytopes corresponding to  qualgebra $4$-chains, excluding the simplex $\Delta^4$ (Fig.~\ref{coloredpentagram}) which is dual to the move on the right-hand side of Fig.~\ref{MP}.

\begin{figure}[htb]
\begin{center}
\includegraphics[width=3in, height=2.4in]{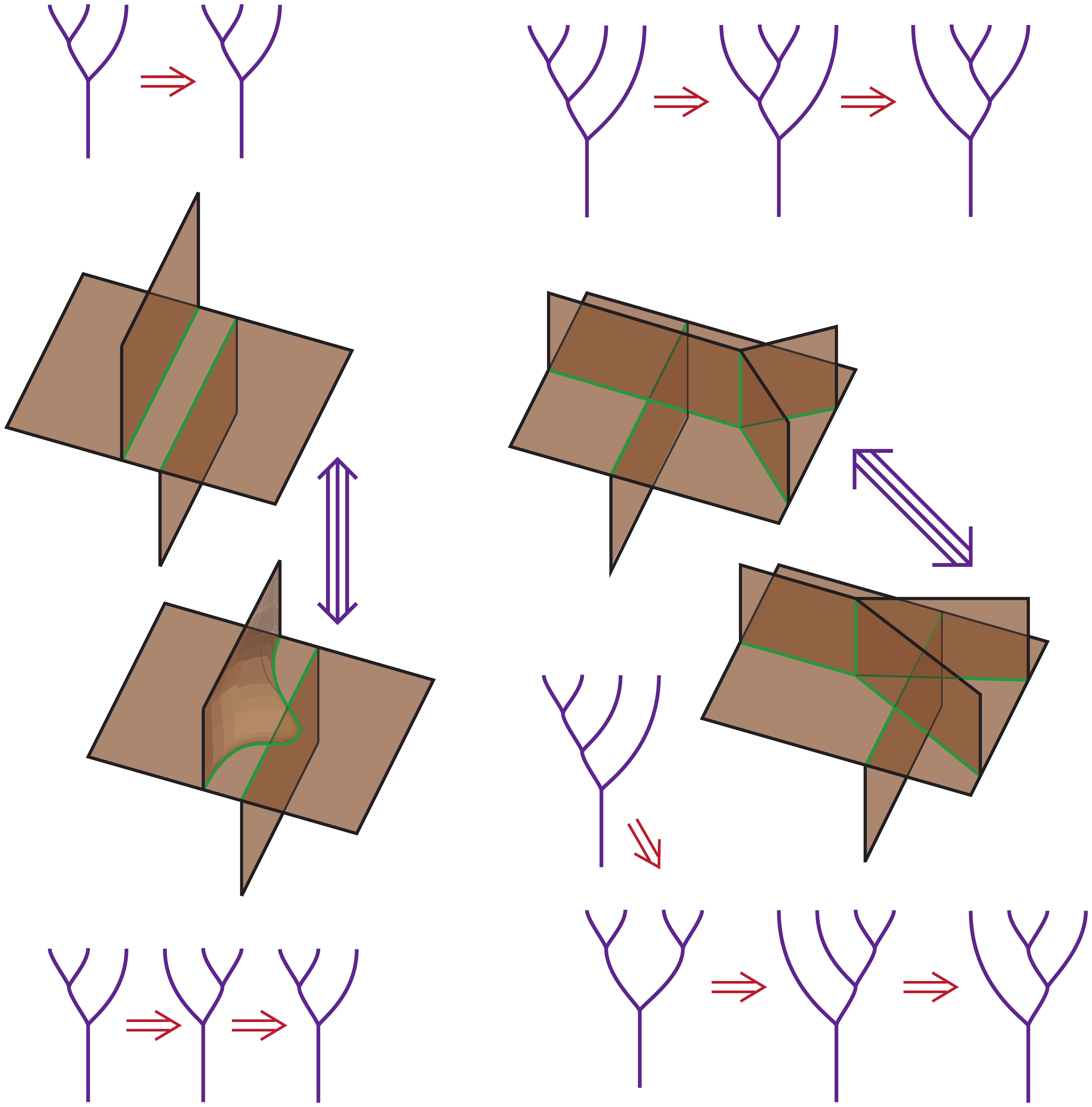}
\end{center}
\caption{The Matveev--Piergallini moves, which change the topology of the foam but preserve its $4$-manifold neighborhood}\label{MP}
\end{figure}

Recall the special $1$-dimensional move $\A$ that relates, in general, different KTGs that become isotopic after thickening, and thus represent the same HBK. Similar moves exist for $2$-dimensional knottings. Two examples are given in Fig.~\ref{MP}. They are well-known to preserve the $3$-manifold neighborhood that contains these spines as deformation retracts \cite{MatveevFoams,Piergallini}. Consequently, the $4$-dimensional neighborhoods are also preserved. Looking at the movie move versions of these two moves, one may recognize the invertibility of the associativity ``$2$-morphism'' $\A$, and what is known in several guises as the pentagon identity, the $(3,2)$ Pachner move, and the Biedenharn--Elliott identity. In our context, we will associate chains to the vertices of the figures. Recall that the associativity moves are directed; thus the left-hand figure represents the difference $(a,b,c)-(a,b,c)$, and the right-hand figure represents the boundary of a $4$-chain.

\section{Prismatic homology with degeneracies}\label{S:Degeneracies}

In \cite{Nosaka_QuSpace, Nosaka_QuSpace2, Yang_ExQuSpace}, the classifying space of a quandle $(G,\lt)$, designed to produce homotopy invariants of oriented links, was constructed by adding extra cells to the rack classifying space of $(G,\lt)$, as introduced in \cite{RackHom}. In a similar way, we will now provide a detailed description of the $4$-dimensional skeleton of a modification $\widetilde{BG}$ of the cell complex $BG$ for a qualgebra $(G,\cdot,\lt)$. In Section~\ref{S:Invariants}, it will be used to define homotopy invariants of KTGs, HBKs, and knotted $2$-foams.

Most moves to KTGs, HBKs, and knotted $2$-foams correspond to
homotopically trivial $3$- and $4$-cells. However, there are no cells that correspond to moves $\I$ and $\T$ to KTGs, and to some moves to $2$-foams (see Fig.~\ref{degeneratechains2} for example). We investigate the 
 corresponding cells that need to be added to construct homotopy invariants.

For a generator $\overline{g} \in G^{\overline{k}} \subset C_{n}$, the $n$-cell labeled by $\overline{g}$ will be denoted by $e_{\overline{g}}$. Here we use notations from Section~\ref{S:all}.

For all $a,b \in G$, the following union of $2$-cells forms a $2$-sphere:
\begin{enumerate}
  \item [(i)] $e_{(a|b)} \cup e_{(b,a \lt b)} \cup -e_{(a,b)}$
\end{enumerate}
(see Fig.~\ref{degeneratechains1}). When labeling the edges in Fig.~\ref{degeneratechains1}, we used Axiom $\T$: $ab=b(a \lt b)$. The figure also contains the corresponding Reidemeister move, in the sense to be made precise in Section~\ref{S:Invariants}. We can thus add a $3$-ball $B_{a,b}^{3}$ into the $3$-dimensional skeleton of $BG$. It is glued to the $2$-skeleton by a homeomorphism $\partial(B_{a,b}^{3}) \to e_{(a|b)} \cup e_{(b,a \lt b)} \cup -e_{(a,b)}$.

\begin{figure}[htb]
\begin{center}
\includegraphics[width=7cm]{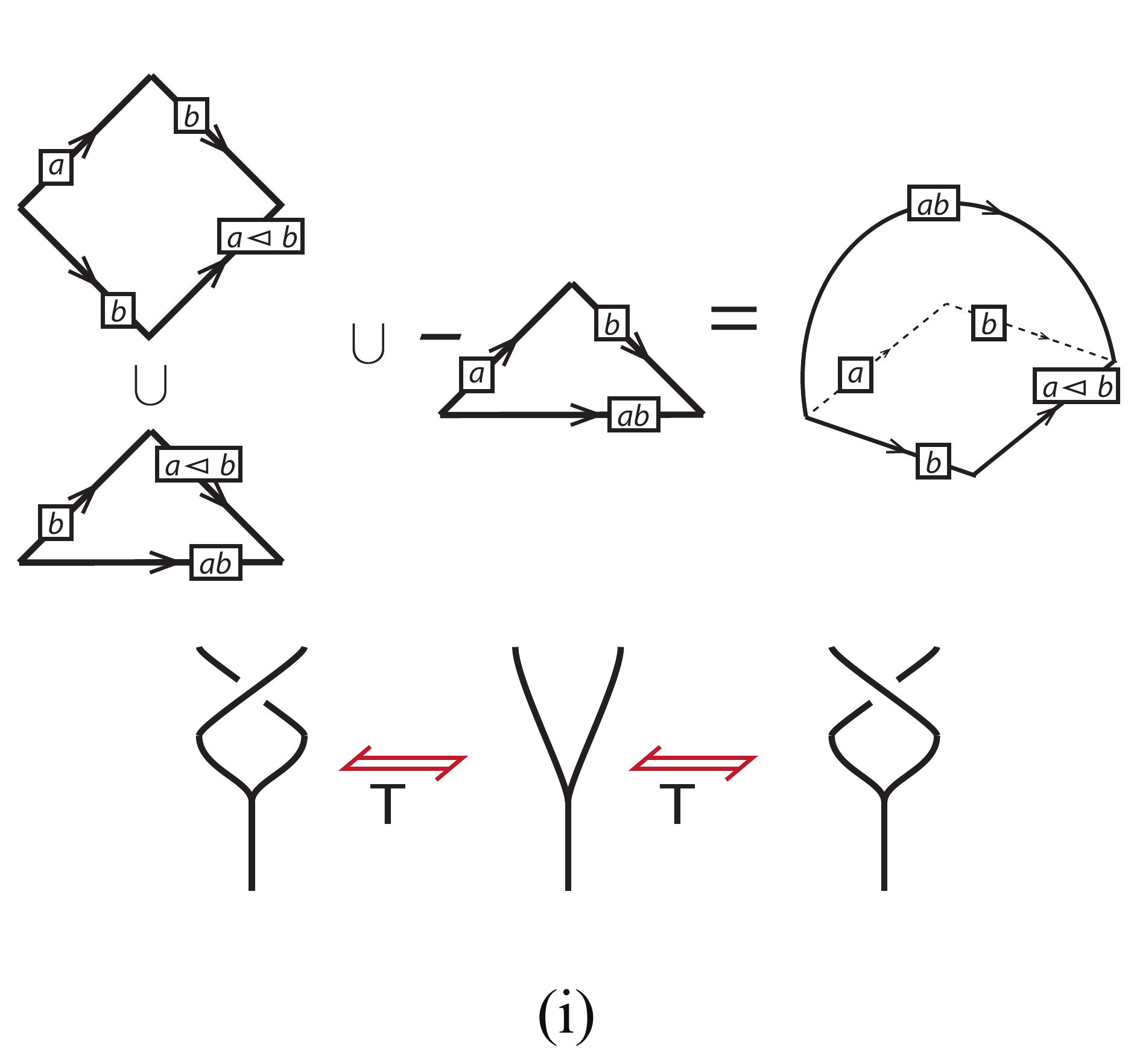}
\end{center}
\caption{The cells corresponding to the $2$-chain $(a|b)+(b,a \lt b) -(a,b)$, and the corresponding Reidemeister move}\label{degeneratechains1}
\end{figure}

Next, we add four types of extra $4$-balls. Note that for any $a,b,c \in G$, the following unions of $3$-cells form $3$-spheres (see Fig.~\ref{degeneratechains2}):
\begin{enumerate}
  \item [(ii)] $S_{1}^{3}:= e_{((a,b)|b)} \cup B_{a,b}^{3} \cup -B_{(ab)^{-1},b}^{3} \cup -B_{b,b}^{3}$,
  \item [(iii)] $S_{2}^{3}:= e_{(a|(a,b))} \cup B_{a,b}^{3} \cup -B_{a \lt b, (ab)^{-1}}^{3} \cup -B_{a,a}^{3}$,
  \item [(iv)] $S_{3}^{3}:= e_{(a|b|c)} \cup e_{(a|(c, b \lt c))} \cup -e_{(a|(b,c))}$,
  \item [(v)] $S_{4}^{3}:= e_{(a|b|c)} \cup e_{((b,a \lt b)|c)} \cup -B_{a \lt c,b \lt c}^{3} \cup -e_{((a,b)|c)} \cup B_{a,b}^{3}$.
\end{enumerate}
When the labels need to be specified, we use the notation $S_{i;a,b,c}^{3}$ instead of $S_{i}^{3}$. To simplify presentation, here we suppose that our qualgebra $G$ is in fact a group. The case of a general qualgebra can be settled by a meticulous treatment of orientations.

\begin{figure}[htb]
\begin{center}
\includegraphics[width=.6\paperwidth]{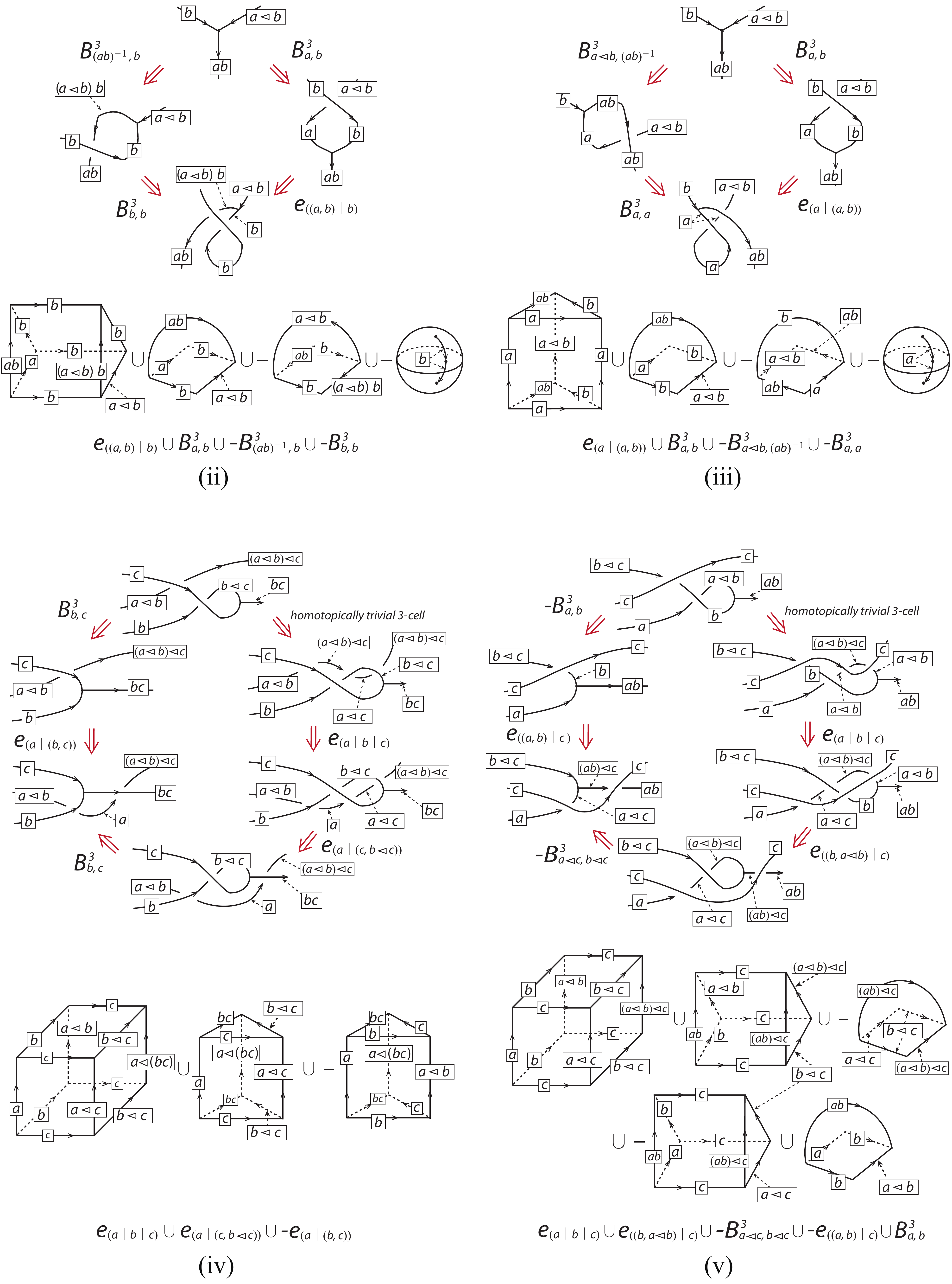}
\end{center}
\caption{Degenerate $3$-chains and their corresponding foam moves}\label{degeneratechains2}
\end{figure}

For every sphere $S_{i;a,b,c}^{3}$, glue a $4$-ball $B_{i;a,b,c}^{4}$ via a homeomorphism $\partial(B_{i;a,b,c}^{4}) \rightarrow S_{i;a,b,c}^{3}$. The cell complex $\widetilde{BG}$ obtained from $BG$ by adding all the $3$-balls $B_{a,b}^{3}$ and all the $4$-balls $B_{i;a,b,c}^{4}$ is called the \emph{classifying space of the qualgebra $(G,\cdot,\lt)$}. The corresponding homology groups are denoted by $H_n^{\rm Q \rm A}(G)$, and called the \emph{qualgebra homology} groups of $G$.

\section{Homological invariants of knottings}\label{S:Invariants}

This section contains our main results. Roughly, Theorem~\ref{main1} states: given a qualgebra $G$, a $G$-colored knotted object in $\R^n$ for $n=2$ or $3$ represents a cycle in $Z_n^{\rm Q \rm A}(G)$. When two diagrams represent equivalent knottings, the representative cycles are homologous. Further, in Theorem~\ref{main2} we interpret colored diagrams of knottings as based homotopy classes of maps from $S^n$ into the qualgebra classifying space $\widetilde{BG}$. Here the base point is the point at infinity in the one point compactification of $\R^n$.

A {\it qualgebra coloring} of a normally oriented diagram of a KTG (resp. knotted foam) is an assignment of elements to the normally oriented arcs (resp. faces) of the diagram that satisfies the conditions that are indicated in Fig.~\ref{coloredvertices} (resp. Figs. \ref{A}-\ref{III}).
 In the Fig.~\ref{coloredvertices} normal orientations are indicated explicitly. The oriented colored $Y^1$ represents a 
 positive 
 chain $(a,b)$. 
 The colored crossing in the middle of the figure is negative; the crossing on the right is positive.
 By convention, a positive chain has two arrows pointing clockwise and one pointing counter clockwise. In the Figs.~\ref{A} through \ref{III} the orientations are suggested by the orientations of the edges of the $3$-dimensional solids that encompass the crossings of the foams. We require that the normal orientations for $Y^1$ are never cyclic. KTGs and knotted foams always admit such orientations.

For convenience, let us call an HBK
or a knotted foam a {\it knotted object}. A generic projection (with crossing information indicated via broken arcs or broken surfaces) of a KTG representing the HBK to the plane, or of a knotted foam to the $3$-space,
will be called a {\it knotted object diagram (KOD)}.  When $G$ is a qualgebra, a $G$-colored KOD represents a cycle in $Z^{\rm P}_n(G)$ for $n=2$ or $3$. In fact, such a representation is given in any dimension. The representation goes as follows:
\begin{itemize}
\item  a colored $Y^1$ (resp. $Y^2$) represents a chain of the form $\pm (a,b)$
(resp, $ \pm (a,b,c)$);
\item a colored (classical) crossing $X$ determines a chain $\pm a|b$ (see Fig.~\ref{coloredvertices});
\item a colored $\Y\I$-type crossing represents a chain $\pm(a,b)|c$;
\item a colored $\I \Y$-type crossing represents a chain $\pm a|(b,c)$;
\item a colored $\I\I\I$-type crossing represents a chain $\pm a|b|c$.
\end{itemize}
Signs are determined by the orientations.
 A closed colored KOD then represents a cycle that is the signed sum of the chains represented by all its (generalized) crossings.

\begin{theorem} \label{main1} The qualgebra homology class determined by the represented cycle of a closed KOD is independent of the representative diagram.
\end{theorem}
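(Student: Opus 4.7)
The plan is to prove the theorem in two stages: first, verify that for any $G$-colored KOD the signed sum of its chains is actually a cycle in the prismatic chain complex; second, show that any local modification of the diagram coming from a Reidemeister/Roseman-type move changes this cycle by a boundary in $\widetilde{BG}$. Since any two KODs representing the same knotted object are connected by a finite sequence of such moves, invariance of the homology class will follow.

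For the cycle property, I would argue locally: each (generalized) crossing of the KOD contributes a prismatic chain, and the boundary of that chain can be read off the faces of the dual prism under the duality described in Section~\ref{S:Knottings}. Each boundary face of a prism corresponds either to an arc/face of the KOD (shared by exactly two crossings) or to a vertex-like feature. For arcs/faces shared between two crossings, the two contributions appear with opposite signs because the incoming/outgoing conventions at the two ends are opposite; this is exactly what the signs in the formulas for $\partial(a,b)$, $\partial(a|b)$, $\partial((a,b)|c)$, $\partial(a|(b,c))$, $\partial(a|b|c)$ encode. The closure of the KOD ensures that every such boundary face is accounted for twice with opposite signs, and the cancellation collapses the global boundary to zero.

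For move invariance, I would go through the moves one at a time and exhibit, for each one, a cell in $\widetilde{BG}$ whose boundary is the difference of the two cycle representatives. The moves $\Y\I$, $\I\Y$, $\I\I\I$ are dual to the prisms $(a,b)|c$, $a|(b,c)$, $a|b|c$ (Figures~\ref{coloredA} and \ref{coloredYI}), and the formulas for $\partial((a,b)|c)$, etc., computed in Section~\ref{S:PrismHom} realize exactly the desired differences; likewise $\A$ is dual to the tetrahedron $(a,b,c)$. The moves corresponding to axioms $\I$, $\I\I$, $\T$ are not already cycles' boundaries in $BG$, but by construction of $\widetilde{BG}$ we have glued in the $3$-ball $B_{a,b}^{3}$ whose boundary is $e_{(a|b)} \cup e_{(b,a \lt b)} \cup -e_{(a,b)}$ (Figure~\ref{degeneratechains1}), and this exactly realizes the $\T$-type difference as a boundary; similar $3$-cells handle $\I$ and $\I\I$. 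For the $2$-dimensional foam case, the seven non-degenerate $4$-dimensional Roseman-type moves are dual to the seven prismatic $4$-cells of Section~\ref{SS:4} other than $\Delta^4$ (and $\Delta^4$ itself covers the $(3,2)$ Pachner/MP move on the right of Figure~\ref{MP}), so each move is the boundary formula for the corresponding $4$-chain. The remaining foam moves — the branch-point and twist-vertex moves — are precisely the degenerate spheres $S_1^3$, $S_2^3$, $S_3^3$, $S_4^3$ from Section~\ref{S:Degeneracies}, which bound the added $4$-balls $B_{i;a,b,c}^{4}$, so they also become boundaries in $\widetilde{BG}$.

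The main obstacle will be bookkeeping, particularly the consistent treatment of normal orientations so that the signs in the chain representation of a colored crossing match the signs in the prismatic differentials. A secondary subtlety is the completeness issue flagged in the discussion around Figure~\ref{Twistandshout}: the list of Roseman moves in \cite{RR} is incomplete, but this is harmless for the theorem because we are only asserting invariance under the moves we explicitly allow, and precisely those moves have been matched to cells of $\widetilde{BG}$. After this case analysis, the theorem follows: the represented cycle is well-defined on homology because it is unchanged modulo boundaries under every move in our generating list, and any two diagrams of the same knotted object are related by finitely many such moves.
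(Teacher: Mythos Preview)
Your proposal is correct and follows essentially the same approach as the paper: both argue that two diagrams of the same knotted object differ by finitely many Reidemeister/Roseman-type moves, and that each move alters the represented cycle by a boundary in the qualgebra complex (with the extra cells of $\widetilde{BG}$ supplying the boundaries for the moves $\I$ and $\T$ and their foam analogues). Your write-up is considerably more explicit than the paper's one-sentence sketch; the only small imprecision is your remark that ``similar $3$-cells handle $\I$ and $\I\I$'': the type~$\I\I$ move already leaves the represented cycle unchanged (the two crossings contribute $\pm(a|b)$ with opposite signs), so no extra cell is needed there, while $\I$ is covered by the special case $B^{3}_{a,a}$ of the $\T$-cell family.
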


\begin{proof}
 If different representative diagrams of a knotted object are chosen, they differ by a finite collection of Reidemeister or Roseman-type moves. Going through the list of these moves, one verifies that each move either does not affect the represented cycle, or changes its qualgebra homology class by a boundary.
\end{proof}

\begin{theorem}\label{main2} For $n=2$ and $3$, colored equivalence classes of KODs correspond to based homotopy classes of maps $S^n \rightarrow \widetilde{BG}$. Here $n=2$ in the case of an HBK, and $n=3$ in the case of a knotted foam. \end{theorem}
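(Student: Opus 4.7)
The plan is to establish a two-way correspondence between colored equivalence classes of KODs and based homotopy classes of maps $S^n \to \widetilde{BG}$. For the forward direction, given a colored KOD $D$ in $\R^n$, I would compactify $\R^n$ to $S^n$ (sending $\infty$ to the basepoint, i.e., the unique $0$-cell of $\widetilde{BG}$) and construct a map $f_D \colon S^n \to \widetilde{BG}$ locally by duality with $\widetilde{BG}$. Each connected component of $\R^n \setminus D$ is collapsed to the $0$-cell; a tubular neighborhood of each generic $(n-1)$-stratum of $D$ (arc for $n=2$, face for $n=3$) is mapped across the $1$-cell of $\widetilde{BG}$ labeled by its color; a neighborhood of each $Y$-vertex, trivalent foam edge, or classical crossing is sent onto the corresponding $2$-cell of $\widetilde{BG}$ (a triangle $(a,b)$ or a square $a|b$, depending on type); and, finally, neighborhoods of the generalized crossings enumerated in the bullet list preceding Theorem~\ref{main1} are sent onto the corresponding prismatic $3$-cells, with compatibility between these local pieces guaranteed precisely by the labeling rule~\eqref{E:LabelingRule} and the attaching maps in Section~\ref{S:all}. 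Standard collaring/transversality arguments show $f_D$ is well defined up to based homotopy.

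For the reverse direction, I would take a based map $f \colon S^n \to \widetilde{BG}$ and apply cellular approximation, then perturb $f$ to be transverse to the dual decomposition of each prismatic cell (and of each extra ball $B^3_{a,b}$, $B^4_{i;a,b,c}$). The preimage in $\R^n = S^n \setminus \{\infty\}$ of the codimension-one dual strata is a KOD $D_f$, whose coloring is read off from which $1$-cell of $\widetilde{BG}$ each $(n-1)$-stratum wraps around. The assignments $D \mapsto f_D$ and $f \mapsto D_f$ are essentially inverse because the dual cell structure on each prism and on each added ball recovers the local picture of the corresponding crossing/vertex up to an isotopy that passes only through cellular models.

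It remains to show that two KODs related by a Reidemeister or Roseman move give rise to homotopic maps, and conversely that a single elementary homotopy of $f$ across an $(n+1)$-cell of $\widetilde{BG}$ translates into a move on $D_f$. For the moves $\A$, $\Y\I$, $\I\Y$, $\I\I\I$, this is exactly the dual of the $\partial^2=0$ computation of Section~\ref{S:PrismHom} applied to the prismatic $(n+1)$-cells, and is essentially a geometric refinement of Theorem~\ref{main1}. For the remaining moves that do not show up as chain-level identities, the point is that the extra $3$-balls $B^3_{a,b}$ were glued precisely along the $2$-sphere of Fig.~\ref{degeneratechains1} corresponding to Axiom $\T$, and the four families of $4$-balls $B^4_{i;a,b,c}$ were glued along the $3$-spheres of Fig.~\ref{degeneratechains2} corresponding to the branch-point and twist-vertex degeneracies together with the $\I\Y$-naturality move.

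The main obstacle I anticipate is this last step: checking case by case that every Reidemeister/Roseman move for HBKs and every move for knotted foams (including the TVMs discussed in Section~\ref{S:Knottings}) is realized as a homotopy across exactly one of the listed $(n+1)$-cells, and conversely that no $(n+1)$-cell of $\widetilde{BG}$ produces a change on $D_f$ outside the allowed list of moves. For $n=2$ (HBKs) the list of moves is the seven Reidemeister moves of Fig.~\ref{Rmoves}, and the correspondence with the $3$-dimensional prismatic cells plus $B^3_{a,b}$ is direct. For $n=3$ the enumeration is more delicate: the invertibility portions of the foam moves are absorbed into cell orientations, the seven ``higher'' moves are matched with the seven non-tetrahedral $4$-dimensional prismatic cells of Section~\ref{SS:4}, the Matveev--Piergallini move on the right of Fig.~\ref{MP} is matched with $\Delta^4$, and the remaining branch-point/TVM-type naturality moves are matched with the $B^4_{i;a,b,c}$, so that the complete list of cells of $\widetilde{BG}$ in dimension $4$ is accounted for.
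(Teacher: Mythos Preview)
Your forward construction---enveloping each colored crossing in its dual prismatic cell, collapsing the complement of the diagram to the basepoint, and then checking that each Reidemeister/Roseman move is realized by a homotopy across an $(n{+}1)$-cell of $\widetilde{BG}$---is precisely the paper's approach, only stated with more care than the paper's own argument, which is little more than a pointer to the schematic Fig.~\ref{Homotopyinvariant} together with the cell-by-cell bookkeeping of Section~\ref{S:Degeneracies}. Your case analysis (prismatic $(n{+}1)$-cells for $\A$, $\Y\I$, $\I\Y$, $\I\I\I$; orientation for invertibility; the balls $B^3_{a,b}$ and $B^4_{i;a,b,c}$ for the remaining degenerate moves) matches the paper's accounting.

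Where you diverge is in attempting a genuine \emph{inverse} via cellular approximation and transversality. The paper does not do this: its proof only constructs the forward map and verifies invariance under moves, so ``correspond to'' in the statement should be read as ``there is a well-defined map to,'' not as a bijection. Your transversality strategy is the natural one if one wants the stronger statement, but it is not free: $\widetilde{BG}$ is not a manifold, so ``transverse to the dual decomposition'' has to be made sense of characteristic-map by characteristic-map, and one must argue that a generic homotopy between two transverse maps meets the $(n{+}1)$-cells only in the patterns you list (and nothing worse coming from the attaching maps of the extra balls). None of this is addressed in the paper, so your reverse direction is a genuine extension rather than a restatement, and the delicate step you flag at the end is real.
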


\begin{proof}
 First, we must disambiguate the notion of equivalence. In the case of HBKs, the diagrams represent the same HBK if and only if they are related by a finite sequence of moves taken from Fig.~\ref{Rmoves}. In the case of knotted foams, we consider knotted foam diagrams modulo the moves given in ~\cite{RR} and the moves given in Figs.~\ref{Twistandshout} and~\ref{MP}.

Next, degeneracies are described in Section~\ref{S:Degeneracies} for both the knotted foams and the handle-body knots. In that section, extra cells of dimension $3$ and $4$ are described as balls whose boundaries have a particular decomposition as squares, triangles, tetrahedra, prisms, and cubes. In the case for which the $3$-dimensional boundary degenerates, these cells are described in terms of the lower dimensional moves.

\begin{figure}[htb]
\begin{center}
\includegraphics[width=10cm]{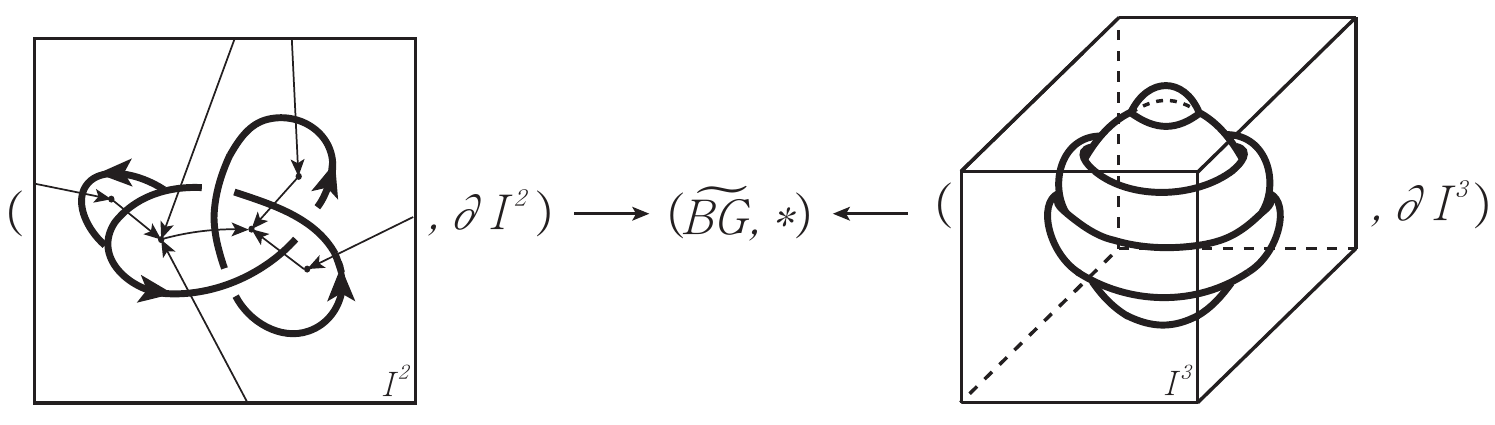}
\end{center}
\caption{Defining the homotopy class of the map}\label{Homotopyinvariant}
\end{figure}

Finally, we comment on a schematic diagram  in Fig.~\ref{Homotopyinvariant} that indicates the gist of the construction. The KOD is placed inside a large rectangle or rectangular box. Each colored crossing (in case $n=2$, $\Y$  or $\X$; in case $n=3$, $\A,$ $\Y\I$, $\I\Y$, or $\I\I\I$) is enveloped in a generalized prism. The union of the prisms is mapped to the classifying space, and the boundary rectangle or cube is mapped to the base point as is the space outside the knotting. Any diagrammatic equivalence corresponds to a higher dimensional cell in the classifying space $\widetilde{BG}$. \end{proof}

\section*{Acknowledgements}  JSC would like to thank Masahico Saito, Atsushi Ishii, and Jamie Vicary for valuable conversations about some aspects of this work.
All the authors are grateful to Petr Vojt\v{e}chovsk\'{y} and Patrick Dehornoy for their dedicating time at the $4$\/th Mile High Conference on Nonassociative Mathematics towards the discussion of self-distributive strutures.

\bigskip
\footnotesize
\bibliographystyle{alpha}
\bibliography{biblio}
\end{document}